\def\NZQ{\mathbb}               
\def\ZZ{{\NZQ Z}}
\def\RR{{\NZQ R}}
\def\frk{\mathfrak}               
\def\Phi{{\frk N}}
\def\eb{{\bold e}}
\def\opn#1#2{\def#1{\operatorname{#2}}} 
\opn\chara{char} \opn\length{\ell} \opn\pd{pd} \opn\rk{rk}
\opn\projdim{proj\,dim} \opn\injdim{inj\,dim} \opn\rank{rank}
\opn\depth{depth} \opn\grade{grade} \opn\height{height}
\opn\embdim{emb\,dim} \opn\codim{codim}
\opn\Tr{Tr} \opn\bigrank{big\,rank}
\opn\superheight{superheight}\opn\lcm{lcm}
\opn\trdeg{tr\,deg}
\opn\reg{reg} \opn\lreg{lreg} \opn\ini{in} \opn\lpd{lpd}
\opn\size{size}\opn{\mult}{mult}
\opn\div{div} \opn\Div{Div} \opn\cl{cl} \opn\Cl{Cl}
\opn\Spec{Spec} \opn\Supp{Supp} \opn\supp{supp} \opn\Sing{Sing}
\opn\Ass{Ass} \opn\Min{Min}
\opn\Ann{Ann} \opn\Rad{Rad} \opn\Soc{Soc}
\opn\Syz{Syz} \opn\Im{Im} \opn\Ker{Ker} \opn\Coker{Coker}
\opn\Am{Am} \opn\Hom{Hom} \opn\Tor{Tor} \opn\Ext{Ext}
\opn\End{End} \opn\Aut{Aut} \opn\id{id}
\opn\nat{nat}
\opn\pff{pf}
\opn\Pf{Pf} \opn\GL{GL} \opn\SL{SL} \opn\mod{mod} \opn\ord{ord}
\opn\Gin{Gin}
\opn\Hilb{Hilb}\opn\adeg{adeg}\opn\std{std}\opn\ip{infpt}
\opn\Pol{Pol}
\opn\sat{sat}
\opn\Var{Var}
\opn\Gen{Gen}
\opn\vol{vol}
\opn\convy{co}
\opn\aff{aff} \opn\con{conv} \opn\relint{relint} \opn\st{st}
\opn\lk{lk} \opn\cn{cn} \opn\core{core} \opn\vol{vol}
\opn\link{link} \opn\star{star}
\opn\gr{gr}
\def\Pc{{\mathcal P}}
\def\Hc{{\mathcal H}}
\def\pot#1#2{#1[\kern-0.28ex[#2]\kern-0.28ex]}
\opn\dirlim{\underrightarrow{\lim}}
\opn\inivlim{\underleftarrow{\lim}}
\let\to=\rightarrow
\def\Implies{\ifmmode\Longrightarrow \else
        \unskip${}\Longrightarrow{}$\ignorespaces\fi}
\def\implies{\ifmmode\Rightarrow \else
        \unskip${}\Rightarrow{}$\ignorespaces\fi}
\def\iff{\ifmmode\Longleftrightarrow \else
        \unskip${}\Longleftrightarrow{}$\ignorespaces\fi}
\newtheorem{Theorem}{Theorem}[section]
\newtheorem{Lemma}[Theorem]{Lemma}
\newtheorem{Corollary}[Theorem]{Corollary}
\newtheorem{Proposition}[Theorem]{Proposition}
\newtheorem{Example}[Theorem]{Example}
\newtheorem{Algorithm}[Theorem]{Algorithm}
\newtheorem{Question}[Theorem]{Question}
\let\epsilon\varepsilon
\let\phi=\varphi
\let\kappa=\varkappa
\opn\dis{dis}
\def\pnt{{\raise0.5mm\hbox{\large\bf.}}}
\opn\Lex{Lex}
\newcommand{\good}{quadratic}
\numberwithin{equation}{section}
\begin{document}
\title{Separating hyperplanes of edge polytopes}
\author{Takayuki Hibi, Nan Li and Yan X Zhang}
\address{Takayuki Hibi,
Department of Pure and Applied Mathematics,
Graduate School of Information Science and Technology,
Osaka University,
Toyonaka, Osaka 560-0043, Japan}
\email{hibi@math.sci.osaka-u.ac.jp}
\address{Nan Li,
Department of Mathematics,
Massachusetts Institute of Technology,
Cambridge, MA 02139, USA}
\email{nan@math.mit.edu}
\address{Yan X Zhang,
Department of Mathematics,
Massachusetts Institute of Technology,
Cambridge, MA 02139, USA}
\email{yanzhang@math.mit.edu}
\thanks{
{\bf 2010 Mathematics Subject Classification:}
Primary 00000; Secondary 00000. \\
\, \, \, {\bf Keywords:}
edge polytope, normal edge polytope, separating hyperplane.
}
\begin{abstract}
Let $G$ be a finite connected simple graph with $d$ vertices
and let $\Pc_G \subset \RR^d$ be the edge polytope of $G$.
We call $\Pc_G$ \emph{decomposable} if $\Pc_G$ decomposes into integral polytopes $\Pc_{G^+}$ and $\Pc_{G^-}$ via a hyperplane. In this paper, we explore various aspects of decomposition of $\Pc_G$: we give an algorithm deciding the decomposability of $\Pc_G$, we prove that $\Pc_G$ is normal if and only if both $\Pc_{G^+}$ and $\Pc_{G^-}$ are normal, and we also study how a condition on the toric ideal of $\Pc_G$ (namely, the ideal being generated by quadratic binomials) behaves under decomposition.

\end{abstract}
\maketitle
\section*{introduction}
A {\em simple graph} is a graph with no loops and no multiple edges.
Let $G$ be a finite connected simple graph with vertex set
$[d] = \{ 1, \ldots, d \}$
and edge set $E(G) = \{ e_1, \ldots, e_n \}$.
Let $\eb_i$ be the $i$-th unit coordinate vector
of the euclidean space $\RR^d$.
If $e = (i, j)$ is an edge of $G$, then
we set
$\rho(e) = \eb_i + \eb_j \in \RR^d$.
The {\em edge polytope} $\Pc_G$ of $G$ is
the convex hull of
$\{ \rho(e_1), \ldots, \rho(e_n) \}$ in $\RR^d$.
The basics of edge polytopes are studied in
\cite{OHnormal}, \cite{OHregular}, \cite{OHkoszul},
\cite{OHquadratic} and \cite{OHcompressed}.

In this paper, we study the decompositions of edge polytopes via hyperplanes. Recall that a convex polytope is \emph{integral} if
all of its vertices have integral coordinates; in particular, $\Pc_G$ is an integral polytope. Let $\partial P$ denote the boundary of a polytope $P$.
We say that $P$ is {\em decomposable} if there exists a hyperplane
$\Hc$ of $\RR^d$ with
$\Hc \cap (P \setminus \partial P) \neq \emptyset$
such that each of the convex polytopes
$P \cap \Hc^{(+)}$ and
$P \cap \Hc^{(-)}$ is integral. Here $\Hc^{(+)}$ and $\Hc^{(-)}$ are the closed half-spaces of $\RR^d$
with $\Hc^{(+)} \cap \Hc^{(-)} = \Hc$.
Such a hyperplane $\Hc$ is called a {\em separating hyperplane} of $P$. 

We start with a nice fact, Lemma \ref{subpolytope}, which shows that when $\Pc_G$ is decomposable into $\Pc_{G^+}$ and $\Pc_{G^-}$, the two subpolytopes are again edge polytopes. This lemma sets the theme for our paper, namely that certain polytope properties (in this case, being an edge polytope) are well-behaved under decomposition when we restrict to edge polytopes. Furthermore, thanks to edge polytopes having associated graphs, these properties may correspond to easily-visualized combinatorial properties of the underlying graphs. When both of these conditions are in place, checking difficult properties on a decomposable polytope may then be reduced to checking those properties on the graphs corresponding to the smaller pieces in its decomposition. This is one motivation for the study of decomposable edge polytopes.

  After the basics, we look at the fundamental problem of determining which edge polytopes are decomposable. 
In Corollary~\ref{cycleoflength4}, we see that a necessary condition for the decomposability of $\Pc_G$ is
that $G$ possesses at least one cycle of length $4$. We also provide Algorithm \ref{base 2 algorithm}, which
decides the decomposability of an edge polytope. We then focus on the case when $G$ is a complete multipartite graph and count the number of separating hyperplanes for such $\Pc_G$ in Theorem \ref{commul}.

Then, we consider the property of normality under decomposition. The characterization of normal polytopes is one of the fundamental questions of the study of lattice polytopes that can also shed light on other properties. For example, it is hard to check combinatorially whether a toric ring of a polytope is Cohen-Macaulay, but Hochster's Theorem \cite{hoch} gives that this is implied by the normality of the polytope. This paper is part of a continuing effort to understand normality in edge polytopes that started in \cite{OHnormal} and \cite{SVV}.  It is known \cite[Corollary 2.3]{OHnormal}
that an edge polytope
$\Pc_G$ is normal
if and only if $G$ satisfies the so-called ``odd cycle condition''
(\cite{FHM}). We use this combinatorial criterion to show that normality of edge polytopes also behaves nicely under decomposition in Theorem \ref{Yan:Normality}; specifically, in a decomposition of an edge polytope $\Pc_G$, then $\Pc_G$ is normal if and only if both subpolytopes are normal.

Finally, we examine the connections between toric ideals and lattice polytopes. In the theory of toric ideals (\cite{Stu}),
special attention has been given to toric ideals
generated by quadratic binomials. As before, we study this property under decomposition by checking a combinatorial condition on the graph, in this case with the help of \cite{OHquadratic}.  We show in Theorem \ref{good if both are good} that $I_G$ is generated by quadratic binomials if both toric ideals corresponding to the subpolytopes have this property. However, we stress that the converse does not hold.

There are other similar questions to be asked following our work. For example, we can ask how the property of the toric ring possessing a quadratic (or squarefree) Gr\"{o}bner basis behaves under decomposition. Unfortunately, we currently do not have a combinatorial description for this property and thus do not know how to approach this problem.

\section{Separating Hyperplanes and Decompositions}

The vertices of the edge polytope $\Pc_G$ of $G$
are $\rho(e_1), \ldots, \rho(e_n)$, but not all edges of the form $(\rho(e_i), \rho(e_j))$ actually occur. For $i \neq j$, let $\convy(e_i,e_j)$ be the convex hull of the pair $\{\rho(e_i), \rho(e_j)\}$. The edges of $\Pc_G$ will be a subset of these $\convy(e_i, e_j)$. For edges $e = (i,j)$ and $f=(k,\ell )$, call the pair of edges $(e,f)$ \emph{cycle-compatible} with $C$ if there exists a $4$-cycle $C$ in the subgraph of $G$ induced by $\{i,j,k, \ell\}$ (in particular, this implies that $e$ and $f$ do not share any vertices). The following result allows us to identify the $\convy(e_i,e_j)$ that are actually edges of $\Pc_G$ using the notion of cycle-compatibility.

\begin{Lemma}[\cite{OHsimple}]
\label{edgeofedgepolytope}
Let $e$ and $f$ be edges of $G$ with
$e \neq f$.  Then $\convy(e,f)$ is an edge of $\Pc_G$ if and only if $e$ and $f$ are \emph{not} cycle-compatible.
\end{Lemma}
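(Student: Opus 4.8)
The plan is to reduce the statement to a question about representations of a single point and then to solve a small linear system. The main tool is the standard criterion that, for the finite point set $\{\rho(e_1),\dots,\rho(e_n)\}$ with convex hull $\Pc_G$ in which $\rho(e)$ and $\rho(f)$ are vertices, the segment $\convy(e,f)$ is an edge of $\Pc_G$ if and only if the midpoint $m=\tfrac12(\rho(e)+\rho(f))$ has only the trivial representation as a convex combination of the $\rho(e_s)$; that is, every expression $m=\sum_s\lambda_s\rho(e_s)$ with $\lambda_s\geq 0$ and $\sum_s\lambda_s=1$ forces $\lambda_s=0$ whenever $e_s\notin\{e,f\}$ (and hence $\lambda_e=\lambda_f=\tfrac12$). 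First I would record this criterion, together with the easy remark that each $\rho(e_s)$ really is a vertex of $\Pc_G$, being the unique $0/1$ vector supported on the endpoints of $e_s$. This turns the lemma into a purely combinatorial assertion about the midpoint $m$.

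Writing $e=(i,j)$, $f=(k,\ell)$ and $V_0$ for the set of their endpoints, note that $m$ is supported on the coordinates in $V_0$; since every $\rho(e_s)$ is nonnegative, any edge occurring with positive weight in a representation of $m$ must have both endpoints in $V_0$, so only the edges of the induced subgraph $G[V_0]$ are relevant. When $e$ and $f$ share a vertex, $|V_0|=3$ and $(e,f)$ is automatically not cycle-compatible; here a direct inspection of the three coordinate equations shows that $m$ has a unique representation, so $\convy(e,f)$ is an edge. When $e$ and $f$ are disjoint, $|V_0|=4$, and I would establish the key dictionary: one of the two \emph{alternate} perfect matchings of $V_0$, namely $\{(i,k),(j,\ell)\}$ or $\{(i,\ell),(j,k)\}$, lies in $E(G)$ if and only if there is a $4$-cycle on $V_0$ --- that is, if and only if $(e,f)$ is cycle-compatible. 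This equivalence is proved by enumerating the three $4$-cycles on the vertex set $V_0$ and checking that each one forces, and is forced by, such an alternate matching (using that $e,f\in E(G)$).

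Given the dictionary, the forward direction is immediate: if $(e,f)$ is cycle-compatible, choose an alternate matching $\{g,h\}\subseteq E(G)$; then $\rho(g)+\rho(h)=\eb_i+\eb_j+\eb_k+\eb_\ell=\rho(e)+\rho(f)$, so $m=\tfrac12(\rho(g)+\rho(h))$ is a second representation supported outside $\{e,f\}$, and by the criterion $\convy(e,f)$ is not an edge.

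The main obstacle is the converse: assuming $(e,f)$ is not cycle-compatible, I must rule out not only alternate two-edge representations but all representations of $m$ that use three or more edges of $G[V_0]$. The plan is to write the value $\tfrac12$ of $m$ at each of the four coordinates as a linear equation in the weights of the (at most six) edges of $G[V_0]$, and to solve the resulting system. Subtracting the coordinate equations in pairs forces the weights of the two edges in each alternate matching to be equal; combined with the hypothesis --- which, by the dictionary, means that at least one edge of each alternate matching is absent from $G$ and hence has weight $0$ --- this forces every cross-edge weight to vanish and the weights of $e$ and $f$ to equal $\tfrac12$. This yields the desired uniqueness and completes the proof. I expect the only delicate point to be the bookkeeping of which cross-edges may be present, which is precisely what the matching dictionary of the second step controls.
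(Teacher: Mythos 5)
Your proof is correct, but there is nothing in the paper to compare it against line by line: the authors state Lemma~\ref{edgeofedgepolytope} as an imported result, citing Ohsugi--Hibi \cite{OHsimple}, and give no proof of it anywhere in the text. Your argument is thus a self-contained substitute for the citation, and it checks out. The midpoint criterion you invoke is valid in both directions: if $\convy(e,f)$ is an edge, the face property of polytopes together with the fact that the points $\rho(e_s)$ are $0/1$ vectors (and are vertices of $\Pc_G$) forces every convex representation of $m=\tfrac12(\rho(e)+\rho(f))$ to be the trivial one; conversely, if $\convy(e,f)$ is not an edge, the minimal face $F$ containing $m$ must have dimension at least $2$ (a $1$-dimensional face containing the two vertices $\rho(e)$ and $\rho(f)$ would be $\convy(e,f)$ itself), and writing $m\in\relint F$ as a strictly positive combination of the at least three vertices of $F$ produces a nontrivial representation. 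The support argument correctly reduces everything to the induced subgraph on the endpoints, and your matching dictionary handles the one subtle case: the $4$-cycle on $\{i,j,k,\ell\}$ that avoids both $e$ and $f$ consists of all four cross edges and hence contains both alternate matchings, so cycle-compatibility is indeed equivalent to some alternate matching lying in $E(G)$. The final linear system also behaves as you predict: subtracting coordinate equations forces the weight of $(i,k)$ to equal that of $(j,\ell)$ and the weight of $(i,\ell)$ to equal that of $(j,k)$, so when each alternate matching misses at least one edge of $G$ all cross weights vanish and $\lambda_e=\lambda_f=\tfrac12$. Compared with the treatment in the Ohsugi--Hibi literature, which characterizes faces of $\Pc_G$ by exhibiting supporting hyperplanes, your route replaces the geometric construction with a small linear-algebra computation at the midpoint; what it buys is that the present paper would become self-contained on this point, at the cost of roughly a page of routine verification.
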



\begin{Example}
\label{completegraph}
{\em
Let $K_d$ denote the complete graph on $[d]$ and $e$ and $f$ be edges of $K_d$. We have that $e \cap f = \emptyset$ exactly when $e$ and $f$ are cycle-compatible. We can then compute the number of edges of $\Pc_{K_d}$ by counting the $2$-element subsets $\{ e, f \}$ of
$E(K_d)$ with $e \cap f \neq \emptyset$, which is
\[
d{d - 1 \choose 2} = \frac{d^2(d-1)}{2}.
\]
}
\end{Example}

\begin{Question}
\label{question:maximal}
{\em
Fix integers $d \geq 2$.  What is the maximal number of possible edges of $\Pc_G$, as $G$ ranges among all finite connected simple graphs on $[d]$?
}
\end{Question}


The condition that $\Pc_G \cap \Hc^{(+)}$ and $\Pc_G \cap
\Hc^{(-)}$ are integral is equivalent to having no edge in $\Pc_G$ intersecting $\Hc$ except possibly at the endpoints. This is, by Lemma~\ref{edgeofedgepolytope}, equivalent to the the following condition: we say that $G$ satisfies the \emph{4-cycle condition} if for any pair of edges $e,f\in E(G)$ such that $\rho(e)\in \Hc^{(+)}\backslash \Hc$ and $\rho(f)\in \Hc^{(-)}\backslash \Hc$, $e$ and $f$ are cycle-compatible.

We can make a simplification of $\Hc$:
\begin{Lemma}
\label{planeshift}
If $\Pc_G$ is decomposable via a hyperplane $\Hc$ that does not go through the origin, there exists a hyperplane $\Hc'$ that gives the same decomposition, with the additional condition that $\Hc$ goes through the origin.
\end{Lemma}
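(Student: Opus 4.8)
The plan rests on the observation that $\Pc_G$ is contained in the affine hyperplane $A = \{x \in \RR^d : \sum_{i=1}^d x_i = 2\}$, since each generator $\rho(e) = \eb_i + \eb_j$ has coordinate sum $2$. Consequently the way in which a hyperplane $\Hc$ decomposes $\Pc_G$ depends only on the trace $\Hc \cap A$, and so I am free to replace $\Hc$ by any hyperplane having the same intersection with $A$. The whole argument will be to produce such a replacement that passes through the origin.

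Concretely, I would write $\Hc = \{x : \langle v, x\rangle = b\}$ for some $v \in \RR^d$ and $b \in \RR$; since $\Hc$ avoids the origin we have $b \neq 0$. For every $x \in A$ one has $\frac{b}{2}\sum_{i=1}^d x_i = b$, so I would set $v' = v - \frac{b}{2}\sum_{i=1}^d \eb_i$ and $\Hc' = \{x : \langle v', x\rangle = 0\}$, which passes through the origin by construction. The key computation is that for each $x \in A$,
\[
\langle v', x\rangle = \langle v, x\rangle - \frac{b}{2}\sum_{i=1}^d x_i = \langle v, x\rangle - b,
\]
so the affine functional cutting out $\Hc'$ agrees on $A$ (hence on $\Pc_G$) with the one cutting out $\Hc$. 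Because the two functionals are literally equal on $\Pc_G$ (not merely equal up to sign), a point of $\Pc_G$ lies in ${\Hc'}^{(+)}$, on $\Hc'$, or in ${\Hc'}^{(-)}$ exactly when it lies in $\Hc^{(+)}$, on $\Hc$, or in $\Hc^{(-)}$ respectively. Thus $\Hc'$ produces precisely the same two integral pieces, i.e.\ the same decomposition.

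It remains to confirm that $\Hc'$ is a genuine hyperplane, namely that $v' \neq 0$; this is the only point requiring care. If $v' = 0$ then $v = \frac{b}{2}\sum_{i=1}^d \eb_i$, and the defining equation of $\Hc$ reduces to $\sum_{i=1}^d x_i = 2$, that is, $\Hc = A \supseteq \Pc_G$. This contradicts $\Hc$ being a separating hyperplane, since a separating hyperplane meets $\Pc_G \setminus \partial \Pc_G$ while leaving points of $\Pc_G$ strictly on each side, and so cannot contain all of $\Pc_G$. Hence $v' \neq 0$ and $\Hc'$ has the required properties. I expect no substantive obstacle here: the argument is elementary, and the degenerate case $\Hc' = A$ is precisely where the separating property of $\Hc$ is invoked.
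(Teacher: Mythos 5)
Your proof is correct and uses essentially the same construction as the paper: replace the normal vector $v$ by $v' = v - \frac{b}{2}\sum_i \eb_i$ and use the fact that $\Pc_G$ lies in the affine hyperplane $\sum_i x_i = 2$ so that the defining functionals agree on $\Pc_G$. The only additions are your explicit check that the half-space memberships (not just $\Hc \cap \Pc_G$) are preserved and that $v' \neq 0$, both of which the paper leaves implicit but which are welcome precision.
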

\begin{proof}
Suppose $\Hc \subset \RR^d$ is defined by
$$a_1 x_1 + \cdots + a_d x_d = b,$$
with each $a_i, b \in \RR$.
Let $\Hc'$ denote the hyperplane
defined by the equation
$$(a_1 - b/2) x_1 + \cdots + (a_d - b / 2) x_d = 0.$$
Since $\Hc \cap \Pc_G$
lies in the hyperplane defined by the equation $x_1 + \cdots + x_d = 2$,
it follows that $\Hc \cap \Pc_G = \Hc' \cap \Pc_G$ and the decomposition is not affected.
\end{proof}

Lemma~\ref{planeshift} allows us to assume that all our separating hyperplanes for edge polytopes go through the origin; we will always make this assumption from now on. When we do, assume that $\Hc^{(+)}$ contains points $(x_1, \ldots, x_n)$ where $\sum a_i x_i \geq 0$ and $\Hc^{(-)}$ contains points where $\sum a_i x_i \leq 0$.

We now introduce the function
$s_\Hc : E(G) \to \{ 0, 1, -1 \}$
defined by setting $s_\Hc((i, j))$ to be the sign of $a_i + a_j$, allowing $0$.
The function $s_\Hc$ enables us to call an edge $e$ ``positive,'' ``negative,''
or ``zero,'' corresponding to whether the associated vertex $\rho(e)$ in $\Pc_G$ is in $\Hc^{(+)}\backslash \Hc$,  $\Hc^{(-)}\backslash \Hc$, or $\Hc$, respectively. We will repeatly use the following fact:

\begin{Corollary}
\label{opp}
For any positive edge $e$ and negative edge $f$ in a decomposition, $e$ and $f$ must be cycle-compatible in a cycle $(e,g,f,h)$, where $g$ and $h$ are zero edges.
\end{Corollary}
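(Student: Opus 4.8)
The plan is to obtain the cycle $(e,g,f,h)$ directly from the separating-hyperplane hypothesis, and then to pin down the signs of the two ``connecting'' edges $g$ and $h$ by playing the $4$-cycle condition off against the vertex-disjointness that is built into cycle-compatibility.

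First I would invoke the fact that, in a decomposition, $\Hc$ satisfies the $4$-cycle condition: every positive edge and every negative edge are cycle-compatible. Applying this to our positive $e$ and negative $f$ produces a $4$-cycle in the subgraph induced on the four endpoints of $e$ and $f$; since cycle-compatibility forces $e$ and $f$ to be vertex-disjoint, these four endpoints are distinct and $e$, $f$ appear as opposite edges of the cycle. Writing $e=(i,j)$ and $f=(k,\ell)$, the cycle therefore has the form $(e,g,f,h)$ with connecting edges $g=(j,k)$ and $h=(i,\ell)$ (after possibly relabeling), each of which shares exactly one vertex with $e$ and one vertex with $f$.

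The heart of the argument is then to show that $g$ and $h$ must be zero edges, and here the key observation is that an edge sharing a vertex with another edge can never be cycle-compatible with it. Suppose, for contradiction, that $g$ is positive. Then $g$ is a positive edge and $f$ is a negative edge, so the $4$-cycle condition would force $g$ and $f$ to be cycle-compatible, hence vertex-disjoint; but $g$ and $f$ share the vertex $k$, a contradiction. Symmetrically, if $g$ were negative, then the positive edge $e$ and the negative edge $g$ would have to be vertex-disjoint, contradicting that they share the vertex $j$. Hence $s_\Hc(g)=0$, i.e. $g$ is a zero edge, and the identical argument applied to $h$ (using the shared vertices $i$ with $e$ and $\ell$ with $f$) shows that $h$ is a zero edge as well.

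I do not expect a serious obstacle here; the only point requiring care is the bookkeeping in the second step, namely verifying that in any $4$-cycle witnessing the cycle-compatibility of the disjoint edges $e$ and $f$, the remaining two edges genuinely each meet both $e$ and $f$ in a single vertex. Once that combinatorial structure is fixed, the sign determination follows immediately from the contrapositive of cycle-compatibility (a shared vertex $\Rightarrow$ not cycle-compatible) combined with the $4$-cycle condition supplied by the separating hyperplane.
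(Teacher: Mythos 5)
Your proof is correct and follows essentially the same route as the paper: cycle-compatibility of $e$ and $f$ comes from the $4$-cycle condition (Lemma~\ref{edgeofedgepolytope}), and the signs of $g$ and $h$ are forced because a positive edge can never share a vertex with a negative edge, as cycle-compatibility demands vertex-disjointness. The paper states this in two sentences where you spell out the contradiction explicitly, but the underlying argument is identical.
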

\begin{proof}
The cycle-compatibility is an immediate corollary of Lemma~\ref{edgeofedgepolytope}. This also implies we cannot have a positive edge sharing a vertex with a negative edge, so since the other two edges share a vertex with both $e$ and $f$, they must be zero edges.
\end{proof}

Since the hyperplane $\Hc$ decomposes $\Pc_G$, we must have at least one
positive edge and at least one negative edge. Thus, we also have:

\begin{Corollary}
\label{cycleoflength4}
Suppose that $\Pc_G$ is decomposable.  Then $G$ must possess at least one cycle of length $4$.
\end{Corollary}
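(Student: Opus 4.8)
The plan is to derive this immediately from the structural result in Corollary~\ref{opp} together with the remark, stated just before the corollary, that a decomposing hyperplane must produce edges of both signs. By Lemma~\ref{planeshift} we may assume the separating hyperplane $\Hc$ passes through the origin, so that each edge of $G$ is classified as positive, negative, or zero according to the sign of $a_i + a_j$. The decomposability hypothesis requires $\Hc \cap (\Pc_G \setminus \partial \Pc_G) \neq \emptyset$ and that both $\Pc_G \cap \Hc^{(+)}$ and $\Pc_G \cap \Hc^{(-)}$ be genuine (full-dimensional-in-$\Pc_G$) pieces; since the vertices of $\Pc_G$ are exactly the $\rho(e)$ for $e \in E(G)$, this forces the existence of at least one vertex strictly in $\Hc^{(+)} \setminus \Hc$ and at least one strictly in $\Hc^{(-)} \setminus \Hc$. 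In other words, there is at least one positive edge $e$ and at least one negative edge $f$.

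First I would fix such a pair $(e,f)$ with $e$ positive and $f$ negative. Next I would invoke Corollary~\ref{opp} directly: it asserts that any positive edge and any negative edge occurring in a decomposition are cycle-compatible, sitting in a $4$-cycle $(e,g,f,h)$ with $g,h$ zero edges. Finally, by the definition of cycle-compatibility this $4$-cycle is precisely a cycle of length $4$ in the subgraph of $G$ induced by the four vertices of $e$ and $f$, hence a cycle of length $4$ in $G$, which is exactly what we want.

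I do not expect any substantive obstacle here, as this is an almost immediate consequence of Corollary~\ref{opp}; the entire content has been front-loaded into that corollary and into the observation that decomposability yields edges of both signs. The only point requiring any care is the justification that both a positive and a negative edge must exist, which is why I would spell out that a strict intersection with the interior of $\Pc_G$, combined with the two pieces being honest subpolytopes, forces vertices strictly on each side of $\Hc$. Once that is in place, the conclusion follows in a single line.
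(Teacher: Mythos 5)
Your proposal is correct and matches the paper's own argument: the paper derives Corollary~\ref{cycleoflength4} in exactly this way, noting that a decomposing hyperplane forces at least one positive and one negative edge and then applying Corollary~\ref{opp} to obtain a $4$-cycle. Your extra care in justifying the existence of edges of both signs is a reasonable elaboration of a step the paper treats as immediate.
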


\begin{Example}
{\em
The converse of Corollary~\ref{cycleoflength4} does not hold for all graphs. Let $G$ be the following graph. 

\begin{center}
\includegraphics{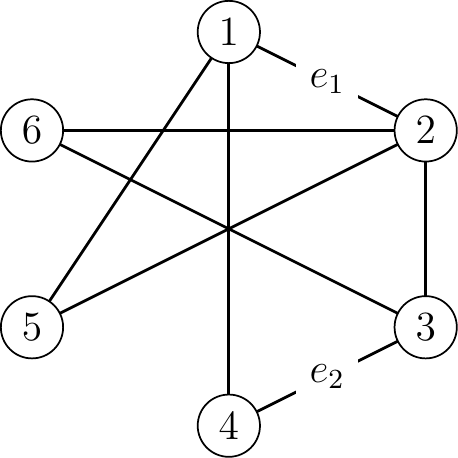}
\end{center}

Even though $G$ possesses a cycle of length $4$,
the edge polytope $\Pc_G$ is indecomposable. If $\Hc$ were a separating
hyperplane with equation $\sum_{i}^6 a_i x_i = 0$, since there is only one $4$-cycle and the two pairs of nonadjacent edges are symmetric, we may assume without loss of generality that $e_1$ is positive and $e_2$ is negative. Since any pair of positive and negative edges must form a $4$-cycle and there are no other $4$-cycles, all remaining $e_i$ must be zero edges. Therefore, we have
\[
a_2 + a_3 = a_1 + a_4 = a_1 + a_5 = a_2 + a_5 = a_2 + a_6 = a_3 + a_6 = 0.
\]
Thus $a_1 = a_2 = a_3 = - a_4 = - a_5 = - a_6$.  In particular $a_3 + a_4 = 0$.
However, since $\rho(e_2) \in \Hc^{(-)} \setminus \Hc$,
one has $a_3 + a_4 \neq 0$, a contradiction.
}
\end{Example}

The following important result tells us that being an edge polytope is hereditary under decomposition:

\begin{Lemma}
\label{subpolytope} Let $G$ be a finite connected simple graph on $[d]$ and suppose that $\Pc_G \subset \RR^d$ is decomposable by $\Hc$. Then each of the subpolytopes $\Pc_G \cap \Hc^{(+)}$ and $\Pc_G \cap \Hc^{(-)}$ is again an edge polytope. More precisely, one has
connected spanning subgraphs $G_+$ and $G_-$ with $\Pc_G \cap
\Hc^{(+)} = \Pc_{G_+}$ $\Pc_G \cap \Hc^{(-)} = \Pc_{G_-}$.
\end{Lemma}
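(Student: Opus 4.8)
The plan is to exhibit the two subgraphs explicitly and then verify the polytope equality and the graph-theoretic properties separately. Define $G_+$ to be the spanning subgraph of $G$ on $[d]$ whose edges are exactly the positive and zero edges (those $e$ with $s_\Hc(e) \geq 0$), and define $G_-$ analogously from the negative and zero edges; I will treat $\Hc^{(+)}$, since the argument for $\Hc^{(-)}$ is symmetric. The points $\rho(e)$ with $s_\Hc(e) \geq 0$ are exactly the vertices of $\Pc_G$ lying in $\Hc^{(+)}$, so the inclusion $\Pc_{G_+} \subseteq \Pc_G \cap \Hc^{(+)}$ is immediate: each such $\rho(e)$ lies in $\Pc_G$ and in the convex set $\Hc^{(+)}$, hence so does their convex hull.

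For the reverse inclusion I would argue that cutting $\Pc_G$ by $\Hc$ introduces no new vertices. By the standard description of a polytope sliced by a hyperplane, every vertex of $\Pc_G \cap \Hc^{(+)}$ is either a vertex of $\Pc_G$ lying in $\Hc^{(+)}$ or the intersection of an edge of $\Pc_G$ with $\Hc$. Now an edge of $\Pc_G$ is some $\convy(e,f)$ with $e,f$ not cycle-compatible (Lemma~\ref{edgeofedgepolytope}); for it to cross $\Hc$ transversally we would need $e$ positive and $f$ negative, but Corollary~\ref{opp} says any such pair is cycle-compatible, so no edge of $\Pc_G$ meets $\Hc$ except at an endpoint. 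Hence there are no crossing-point vertices, every vertex of $\Pc_G \cap \Hc^{(+)}$ is a vertex $\rho(e)$ of $\Pc_G$ with $s_\Hc(e) \geq 0$, and therefore $\Pc_G \cap \Hc^{(+)} = \con\{\rho(e) : s_\Hc(e) \geq 0\} = \Pc_{G_+}$.

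It remains to show $G_+$ is connected and spanning, which I expect to be the main obstacle, as it is the only step using the global combinatorics of $G$ rather than a formal polytope manipulation. The key input is again Corollary~\ref{opp}: fixing any positive edge $e$ (one exists by decomposability), every negative edge $f = (u,v)$ sits in a $4$-cycle $(e,g,f,h)$ whose remaining two edges $g,h$ are zero edges, one incident to $u$ and one to $v$. This yields two facts. First, each endpoint of a negative edge is also incident to a zero edge, so no vertex of $G$ is incident only to negative edges; since $G$ is connected (hence has no isolated vertices), every vertex of $[d]$ is covered by an edge of $G_+$, i.e.\ $G_+$ is spanning. Second, the two zero edges $g,h$ together with the positive edge $e$ give a path $u - \cdot - \cdot - v$ lying entirely in $G_+$, so the endpoints of every negative edge already lie in the same connected component of $G_+$.

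Finally I would deduce connectivity. Writing $E(G)$ as the disjoint union of $E(G_+)$ and the negative edges, any path in the connected graph $G$ between two vertices can be rerouted inside $G_+$ by replacing each negative edge $(u,v)$ it uses with a $G_+$-path from $u$ to $v$, which exists by the previous paragraph. Hence any two vertices are joined within $G_+$, so $G_+$ is connected. This completes the proof that $\Pc_G \cap \Hc^{(+)} = \Pc_{G_+}$ for a connected spanning subgraph $G_+$, and the corresponding identity for $\Hc^{(-)}$ and $G_-$ follows by the symmetric argument.
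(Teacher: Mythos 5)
Your proof is correct, but it takes a genuinely different route from the paper's at both steps. For the polytope equality $\Pc_G \cap \Hc^{(+)} = \Pc_{G_+}$, the paper simply asserts it (``one can assume''), whereas you actually prove it via the standard fact that slicing a polytope can only create new vertices at points where edges cross the hyperplane, and then rule out such crossings using Lemma~\ref{edgeofedgepolytope} together with Corollary~\ref{opp}; this makes explicit an implication the paper leaves to the reader. For the claim that $G_+$ and $G_-$ are connected spanning subgraphs, the paper uses a dimension argument: each subpolytope has the same dimension as $\Pc_G$ (since $\Hc$ meets the interior), and by Proposition~1.3 of \cite{OHnormal} a subgraph whose edge polytope attains that dimension must be connected and spanning. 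You instead argue combinatorially and self-containedly from Corollary~\ref{opp}: every negative edge $(u,v)$ sits in a $4$-cycle whose other three edges lie in $G_+$, so its endpoints are incident to zero edges (giving spanning) and are joined by a path of length $3$ in $G_+$ (so any $G$-path can be rerouted inside $G_+$, giving connectivity). The paper's argument is shorter and leans on prior structural results about $\dim \Pc_G$; yours avoids any external citation and any dimension computation, and has the added benefit of exhibiting the connecting paths explicitly, which is in the constructive spirit of the rest of the paper (e.g.\ Algorithm~\ref{base 2 algorithm}). Both proofs are complete and valid.
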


\begin{proof}
Let $G_+$ and $G_-$ be subgraphs of $G$ with $E(G_+) = \{ e \in E(G)
: s_\Hc(e) \geq 0\}$ and $E(G_-) = \{ e \in E(G) : s_\Hc(e) \leq
0\}$. Let $\Pc_{G_+}$ be the subpolytope of $\Pc_G$ which is the
convex hull of $\{ \rho(e) : e \in E(G_+) \}$ and let $\Pc_{G_-}$ be the
subpolytope of $\Pc_G$ which is the convex hull of $\{ \rho(e) : e
\in E(G_-) \}$. One can assume $\Pc_G \cap \Hc^{(+)} = \Pc_{G_+}$
and $\Pc_G \cap \Hc^{(-)} = \Pc_{G_-}$.

Since the dimension of each of
the subpolytopes $\Pc_{G_+}$ and $\Pc_{G_-}$
coincides with that of $\Pc_G$, it follows from 
\cite[Proposition 1.3]{OHnormal} 
that both subgraphs $G_+$ and $G_-$ must be connected spanning
subgraphs of $G$,
as desired.
\end{proof}

\begin{Example}
\label{ex:first}
{\em
An example of a decomposition of a graph $G$ using the hyperplane $-x_1+x_4-x_5+x_6=0$. For edges $(i,j)$ in $G$, $a_i+a_j$ is only nonzero for $\{i,j\} = \{1,2\}$ or $\{3,4\}$, with values $-1$ and $1$ respectively. These correspond to the two non-zero edges.  We give two equivalent graphical representations of this decomposition, one by showing $G_+$ and $G_-$ explicitly and one by marking the non-zero edges $+$ or $-$.

\begin{center}
\begin{tabular}{c|c|c}
$G$ &  $G_+$    &   $G_-$ \\
\hline
\includegraphics{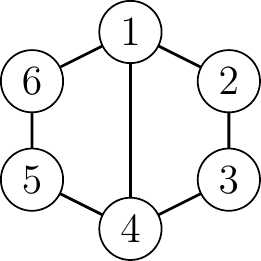}
&
\includegraphics{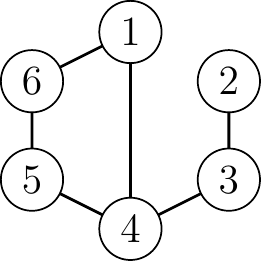}
&
\includegraphics{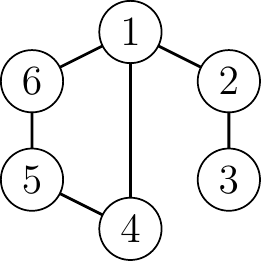}
\\
\hline
\end{tabular}
\end{center}
\begin{center}
\includegraphics{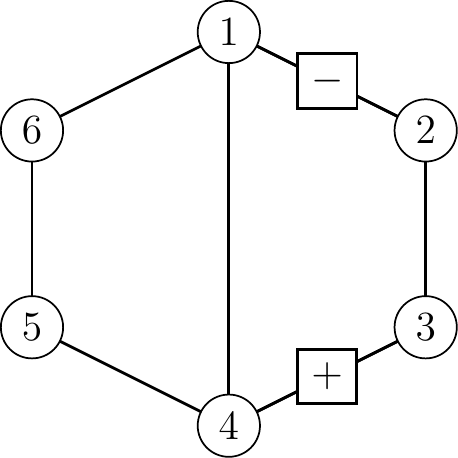}
\end{center}
}
\end{Example}

We will now prove a result that allows us to normalize the coefficients used in the hyperplane in a decomposition. Let the \emph{weight} of a vertex $i$ of $G$ be $a_i$ and let the \emph{signature} of the edge $(i,j)$ (or the corresponding vertex $\rho((i,j))$ in $\Pc_G$) be the set of weights $\{a_i, a_j\}$ and the \emph{weight} of $(i,j)$ be $a_i + a_j$. An edge $e$ has the sign function $s_\Hc(e)$ equal to the sign of the sum of the weights in its signature.

\begin{Proposition}
\label{Yan:hyperplane}
Suppose we have a decomposition $\Pc_G = \Pc_G^{(+)} \cup \Pc_G^{(-)}$. Then for some fixed $a, b$ with $a + b > 0$, every positive edge must have signature $\{a,b\}$ and every negative edge must have signature $\{-a, -b\}$. Furthermore, we can assume that all weights take values in $\{0, 1, -1\}$.
\end{Proposition}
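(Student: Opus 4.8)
The plan is to leverage Corollary~\ref{opp}, which forces every positive edge and every negative edge to lie in a common $4$-cycle whose other two edges are zero edges; this is the engine behind both parts of the statement.

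For the signature claim, I would first fix an arbitrary positive edge $e=(i,j)$ and write its signature as $\{a_i,a_j\}$, where $a_i+a_j>0$ by the definition of ``positive.'' Given any negative edge $f=(k,\ell)$, Corollary~\ref{opp} produces a $4$-cycle $(e,g,f,h)$ in which $g$ and $h$ are zero edges joining the endpoints of $e$ to those of $f$. Reading off the two zero-edge equations (e.g.\ $a_j+a_k=0$ and $a_\ell+a_i=0$) gives $a_k=-a_j$ and $a_\ell=-a_i$, so $f$ has signature $\{-a_i,-a_j\}$; the other way of pairing the cycle yields the same conclusion. Thus every negative edge has signature $\{-a_i,-a_j\}$. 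Running the symmetric argument from a second positive edge $e'$ against one fixed negative edge then forces $e'$ to share the signature of $e$, and setting $\{a,b\}:=\{a_i,a_j\}$ completes the claim, with $a+b>0$ automatic.

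For the normalization the key preliminary step is to show that \emph{every} vertex weight lies in $\{a,b,-a,-b\}$. Endpoints of positive (resp.\ negative) edges already have weights in $\{a,b\}$ (resp.\ $\{-a,-b\}$) by the first part, and these two vertex sets are disjoint by Corollary~\ref{opp}. For a vertex incident only to zero edges I would argue through the connected components of the subgraph of zero edges: along a zero edge the two weights are negatives of each other, so within one component all weights equal $\pm w$ for a single level $w$. Since $G$ is connected and carries at least one non-zero edge, no zero-edge component can consist solely of zero-only vertices (such a component, having no edges leaving it, would be a connected component of $G$ with no positive or negative edge, contradicting connectivity together with decomposability). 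Hence each component contains an endpoint of a non-zero edge, pinning its level $w$ to $|a|$ or $|b|$ and confining all weights to $\{a,b,-a,-b\}$.

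With at most four weight values in play, I would finish by an explicit relabeling, taking WLOG $a\ge b$ (so $a>0$ since $a+b>0$) and splitting on whether $b\ge 0$ or $b<0$. The subtle point, and the main obstacle, is that the naive map sending each $a_i$ to $\operatorname{sign}(a_i)$ reproduces the decomposition only when $b\ge 0$; when $a>0>b$ a positive edge has signature $\{a,b\}$ with one positive and one negative weight, and the sign map would wrongly collapse it into a zero edge. The remedy is to collapse the \emph{smaller} pair instead: send $\pm a\mapsto\pm 1$ and $\pm b\mapsto 0$. In either case the new weights take values in $\{0,1,-1\}$, and I would then verify edge-by-edge that they preserve the sign of every positive, negative, and zero edge (the zero edges being automatic, since each pairs a value with its negative), which establishes the second claim.
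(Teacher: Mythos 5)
Your proposal is correct and takes essentially the same route as the paper: both arguments propagate signatures through the zero-edge $4$-cycles of Corollary~\ref{opp}, use connectivity of $G$ along zero edges to confine all weights to the (at most two) values coming from a nonzero edge, and then normalize by collapsing one value class to $0$ and rescaling the other to $\pm 1$. The only cosmetic difference is that you organize the normalization as a case split on the sign of $b$ (using the sign map when $b \geq 0$, collapsing $\pm b$ to $0$ when $b<0$), whereas the paper uniformly collapses the smaller absolute value to $0$ and then scales; these are the same idea.
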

\begin{proof}
Take a positive edge $(i,j)$ and a negative edge $(k,\ell)$,
both of which must exist since $\Hc$ decomposes $\Pc_G$. Lemma
\ref{edgeofedgepolytope} and the $4$-cycle condition give that
 $\{i,j\} \cap \{k,\ell\} =
\emptyset$ and the two edges are cycle-compatible with some cycle
$C$.
Without loss of generality, say $C = (i,j,k,\ell)$.
Since $s_\Hc((j,k)) = s_\Hc((\ell, i)) = 0$,
one has $a_j = - a_k$ and $a_\ell = -a_i$. So the claim holds true for one pair of edges with opposite sign.

However, we can say a lot more.
Take any other positive or negative edge $e = (i',j')$ of $G$.
By using Lemma \ref{edgeofedgepolytope} again, we can use a cycle of length $4$
between $e$ and either $(i,j)$ or $(k,\ell)$
(whichever one with the opposite sign as $e$)
in order to show 
$\{|a_{i'}|, |a_{j'}|\} = \{|a_i|, |a_j|\}$.
Consequently, if $q$ is a vertex belonging to either a positive or negative edge, then $a_q$ can take at most two possible absolute values.

Now, since $G$ is connected, any other vertex is connected to one of
those vertices via a chain of zero edges.
Note that if $(k',\ell')$ is a zero edge,
then $|a_{k'}| = |a_{\ell'}|$.
Thus the other vertices cannot introduce any new absolute values,
and one has at most two absolute values among all $a_i$'s, as desired.

We now show that we can move the $a_i$ without changing $\Pc_G^{(+)}$ and $\Pc_G^{(-)}$ such that there is at most one nonzero absolute value. If not, without loss of generality only $a$ and $b$ exist as absolute values and $a < b$. Note that moving all $a_i$ with absolute value $a$ to $0$ does not change
any of the signs of the edges and thus the decomposition. So we can assume that besides $0$'s, there is only one absolute value $b$.
We can then scale all these vertices with values $\pm b$ to $\pm 1$ without changing the signs.
\end{proof}

Proposition~\ref{Yan:hyperplane} and Lemma~\ref{planeshift} allow us to restrict attention to hyperplanes of the following form:

\begin{equation}\label{hyper}
\Hc:\,\sum_i \pm x_i =0.
\end{equation}

An intuitive restatement is that we can consider a hyperplane as an assignment of a \emph{zero}, \emph{negative}, or \emph{positive charge} to each vertex, corresponding to assigning the value of $0, -1,$ or $1$ respectively to the weight $a_i$. We can then read the signs of the edges off the graph by just adding the charges at the two incident vertices.

\begin{Example}
{\em
We revisit Example~\ref{ex:first}. We can represent the hyperplane $-x_1+x_4-x_5+x_6=0$ by assigning positive charges to $\{4,6\}$ and negative charges to $\{1,5\}$ and zero charges to the other vertices. This makes $(1,2)$ a negative edge and $(3,4)$ a positive edge; the other edges have zero charge.
}
\end{Example}
\begin{center}
\includegraphics{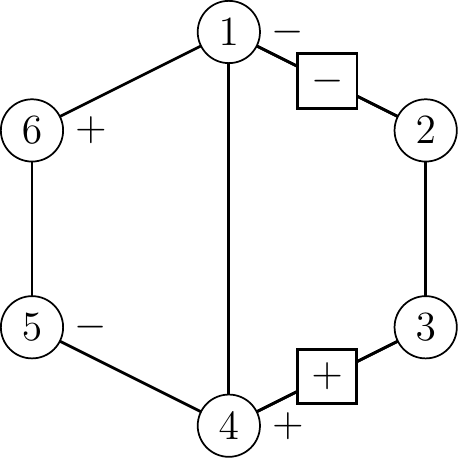}
\end{center}

\begin{Corollary}
\label{decomposition types}
Suppose we have a decomposition of the edge polytope $\Pc_G$. Then we can assume one of the following two cases for the vertices of $G$:
\begin{enumerate}
\item There are no vertices with weight $0$. All positive edges have signature $\{1, 1\}$ and all negative edges have signature $\{-1, -1\}$.
\item There is at least one vertex with weight $0$. All positive edges have signature $\{1, 0\}$ and all negative edges have signature $\{-1, 0\}$. 
\end{enumerate}
\end{Corollary}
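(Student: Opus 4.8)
The plan is to derive the dichotomy directly from Proposition~\ref{Yan:hyperplane} by enumerating the possible common signature of the positive edges. First I would invoke Proposition~\ref{Yan:hyperplane} to normalize the weights so that every $a_i \in \{0, 1, -1\}$, and so that there is a fixed pair $\{a, b\}$ with $a + b > 0$ that is the signature of every positive edge, while $\{-a, -b\}$ is the signature of every negative edge. Since the weights now lie in $\{0, 1, -1\}$, the only pairs $\{a, b\}$ drawn from these values with $a + b > 0$ are $\{1, 1\}$ and $\{1, 0\}$. This immediately splits the argument into the two cases appearing in the statement, and in each case the claimed signatures of the positive and negative edges are precisely these options together with their negations.

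It then remains to match each signature case with the corresponding statement about weight-$0$ vertices. In the case $\{a, b\} = \{1, 0\}$, every positive edge already has a weight-$0$ endpoint, and at least one positive edge exists because $\Hc$ decomposes $\Pc_G$; hence $G$ has a vertex of weight $0$, giving case~(2).

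The only real content lies in the case $\{a, b\} = \{1, 1\}$, where I must rule out weight-$0$ vertices in order to land in case~(1). Here I would argue by connectivity: if $v$ were a vertex with $a_v = 0$, then for any neighbor $u$ the edge $(u, v)$ has weight $a_u$ and signature $\{0, a_u\}$. If $a_u = 1$ this edge would be a positive edge of signature $\{0, 1\} \neq \{1, 1\}$, and if $a_u = -1$ it would be a negative edge of signature $\{0, -1\} \neq \{-1, -1\}$; both contradict Proposition~\ref{Yan:hyperplane}. Hence every neighbor of $v$ must also have weight $0$, and since $G$ is connected this forces every vertex to have weight $0$, so that every edge is a zero edge. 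That contradicts the existence of a positive (and a negative) edge guaranteed by the decomposition. Therefore no weight-$0$ vertex exists, yielding case~(1). The main obstacle is exactly this propagation argument through the connected graph $G$; the remainder is a finite check on the admissible signatures.
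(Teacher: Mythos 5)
Your proof is correct and uses essentially the same ingredients as the paper's: Proposition~\ref{Yan:hyperplane} for the uniform signatures, connectivity of $G$, and the existence of at least one positive and one negative edge. The only difference is organizational --- you split cases by the common signature $\{a,b\}\in\{\{1,1\},\{1,0\}\}$ while the paper splits by whether a weight-$0$ vertex exists; your propagation argument (every neighbor of a weight-$0$ vertex must again have weight $0$) is just the contrapositive of the paper's observation that connectivity forces an edge of signature $\{0,\pm 1\}$ when a weight-$0$ vertex exists.
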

\begin{proof}
If there are no vertices with weight $0$, it is easy to see that we are in the first case. It suffices to prove that when we have at least one vertex with weight $0$, the conditions in the second case apply. Since $G$ is connected, there must be at least one edge of signature $\{0, a\}$ where $a \in \{-1, 1\}$. Proposition~\ref{Yan:hyperplane} gives that all positive edges must then be of signature $\{0,1\}$ and all negative edges must have signature $\{0,-1\}$. Thus, no edges of signatures $\{1, 1\}$ or $\{-1, -1\}$ can occur, which is equivalent to saying that all nonzero signatures are $\{1, 0\}$ or $\{-1, 0\}$ (equivalently, the $1$-weighted vertices and the $(-1)$-weighted vertices are isolated sets).
\end{proof}

We call the two types of labelings from Corollary~\ref{decomposition types} \emph{type $I$} and \emph{type $II$} respectively. Using these two types, we give an algorithm to check the decomposability of an edge polytope $P_G$ from its graph:

\begin{Algorithm}
\label{base 2 algorithm}
{\em
We check for type $I$ and type $II$ decomposability separately.

\noindent (\emph{type $I$}) Create an empty list $F$. For every pair of vertex-disjoint edges $e_-$ and $e_+$:
\begin{enumerate}
\item set the signatures of $e_-$ and $e_+$ to $\{-1,-1\}$ and $\{1,1\}$ respectively by setting the weights of relevant vertices.
\item try to set weights $-1$ and $1$ to the other vertices one at a time, each time setting the weight of a vertex adjacent to a vertex with weight already set, until one of the following occurs:
\begin{enumerate}
\item the weights of all vertices are set, in which case we have a decomposition;
\item If we have forced any edge $e$ to be a positive or negative edge, check it against every edge $f$ with opposite sign. We cannot continue if one of the following two things happen:
\begin{enumerate}
\item if $e$ and $f$ are not cycle-compatible ($4$-cycle condition)
\item if $(e,f)$ is in $F$ (we have failed to assign opposite signs to these edges in the past).
\end{enumerate}
if one of these happens, add $(e_-, e_+)$ to $F$ and stop the search.
\end{enumerate}
\end{enumerate}
(\emph{type $II$}) Create an empty list $F$. For every pair of vertex-disjoint edges $e_-$ and $e_+$:
\begin{enumerate}
\item set the signatures of $e_-$ and $e_+$ to $\{-1,0\}$ and $\{1,0\}$ respectively by setting the weights of relevant vertices (we do $4$ for-loops in this case, corresponding to the $4$ possible assignments of the weights).
\item try to set weights $-1$, $0$, or $1$ to the other vertices one at a time, each time setting the weight of a vertex adjacent to a vertex already set with weight $-1$ or $1$ (note this means we always have at most $2$ choices, since we cannot put two $1$'s adjacent to each other or two $-1$'s adjacent to each other by Corollary~\ref{decomposition types}), until one of the following occurs:
\begin{enumerate}
\item all vertices are set, in which case we have a decomposition;
\item there are no non-set vertices adjacent to vertices set with $-1$ and $1$, in which case we may set all the unset vertices to weight $0$ and obtain a decomposition;
\item we prune in a similar manner to the type $I$ case (for every new weighted edge, check if it is compatible with the existing edges of the other parity via checking both the $4$-cycle condition and $F$). If we cannot continue, add $(e_-, e_+)$ to $F$ and stop the search.
\end{enumerate}
\end{enumerate}
}
\end{Algorithm}

Naively, Proposition~\ref{Yan:hyperplane} tells us we can check decomposability by iterating over $3^n$ cases. Algorithm~\ref{base 2 algorithm} cuts the base of the exponent to $2$ in each of the two types. Though the result is still exponential in the worst possible cases (namely very dense graphs), the pruning process should usually provide large optimizations. Note that checking the compatibility conditions of edges with opposite parity is not a bottleneck of the calculation, as in both types we can keep a precomputed hash for all cycle-compatible edge pairs with a one-time $O(n^4)$ calculation.

\begin{Question}
\label{question:complexity}
{\em
Is there a polynomial-time algorithm to decide if $\Pc_G$ is decomposable? Is there a polynomial-time algorithm to decide if $\Pc_G$ is type $I$ (or type $II$) decomposable?
}
\end{Question}

\section{Counting Decompositions}

Given a graph $G$, we may also be interested in counting the number of
decompositions. Since even the decidability of the existence of a decomposition seems to be difficult, we do not expect this to be a tractable problem except in specific cases; moreover, we need to be careful since different separating hyperplanes may give the same decomposition.

\begin{Example}
{\em
Consider the four-cycle $C_4$ with vertices $1,2,3,4$. Then the hyperplanes $x_1-x_4=0$, $x_2-x_3=0$ and $x_1+x_2-x_3-x_4=0$ give us the same decomposition of $\Pc_{C_4}$. Notice that the first two hyperplanes are of type $II$ (with at least one zero coefficient) and the third hyperplane is of type $I$ (with no zero coefficient).
}
\end{Example}

If we restrict to type $I$ hyperplanes, however, we get a unique decomposition (up to sign):
\begin{Lemma}\label{unique}Two different type $I$ hyperplanes will result in different decompositions of the edge polytope, unless they differ only by an overall sign on the weights.
\end{Lemma}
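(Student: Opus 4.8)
The plan is to show that a type $I$ hyperplane is, up to its overall sign, completely reconstructible from the purely combinatorial data of the decomposition it induces, namely the partition of $E(G)$ into positive, negative, and zero edges.

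First I would record what a type $I$ hyperplane really is. By Corollary~\ref{decomposition types}, a type $I$ labeling assigns to each vertex a weight in $\{+1,-1\}$ (no zero weights occur), so it is nothing more than a $2$-coloring of the vertex set $V(G) = V_+ \sqcup V_-$, where $V_{\pm}$ collects the vertices of weight $\pm 1$. Under this coloring an edge is positive exactly when both endpoints lie in $V_+$, negative exactly when both lie in $V_-$, and zero exactly when it joins $V_+$ to $V_-$. Thus I will regard a type $I$ hyperplane as a labeling of the edges of $G$ as \emph{monochromatic} (positive or negative) or \emph{bichromatic} (zero), one that is consistent with the underlying vertex $2$-coloring.

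Next I would argue that this edge-labeling is recovered from the decomposition alone. By Lemma~\ref{subpolytope} the decomposition is the unordered pair of edge polytopes $\Pc_{G_+}, \Pc_{G_-}$, and since the vertices of an edge polytope are precisely the distinct points $\rho(e)$, reading off the vertices of these two polytopes recovers the unordered pair of edge sets $\{E(G_+), E(G_-)\}$. The zero (bichromatic) edges are then exactly $E(G_+) \cap E(G_-)$ and the monochromatic edges are exactly the symmetric difference of $E(G_+)$ and $E(G_-)$; both of these sets are determined by the unordered pair, so the monochromatic/bichromatic labeling of $E(G)$ depends only on the decomposition and not on which piece we call $G_+$.

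Finally I would invoke connectivity to pin down the coloring. Given the monochromatic/bichromatic labeling of the edges, a consistent vertex $2$-coloring is determined up to a global swap of the two colors: after fixing the color of one vertex, the color of any other vertex is forced by tracing a path to it and flipping precisely at the bichromatic edges, and such a path exists because $G$ is connected. Hence two type $I$ hyperplanes inducing the same decomposition yield the same edge-labeling, and therefore vertex $2$-colorings that agree up to a global swap of $V_+$ and $V_-$ --- which is exactly a global sign change of the weights. The one point demanding care, and the mild obstacle, is the bookkeeping around the decomposition being an \emph{unordered} pair: one only recovers the symmetric monochromatic/bichromatic labeling rather than the signed positive/negative one. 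This is exactly what the statement allows, since the residual ambiguity is precisely the overall sign.
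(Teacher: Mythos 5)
Your proof is correct and follows essentially the same route as the paper: recover the partition of $E(G)$ into zero and nonzero edges from the decomposition (up to the overall sign ambiguity), then use the $\pm 1$ constraint and connectivity of $G$ to see that the vertex weights are uniquely determined. The paper anchors the weights at the vertices of positive and negative edges and then states that the remaining coefficients are forced, whereas you propagate from a single vertex flipping across zero edges; this is the same argument, with your version making explicit the connectivity propagation and the unordered-pair bookkeeping that the paper leaves implicit.
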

\begin{proof}Suppose two hyperplanes give the same decomposition $\Pc_{G}=\Pc_{G_+}\cup\Pc_{G_-}$. Then up to sign, we can suppose that the edges in $G_0=G_+\cap G_-$ are all zero, the edges in $G_+\backslash G_0$ are all positive, and the edges in $G_-\backslash G_0$ are all negative. Since both hyperplances are of type $I$, we have $a_i=a_j=1$ for the coefficients in the hyperplane for each positive edge $(i,j)$ and $a_k=a_{\ell}=-1$ for each negative edge $(k,\ell)$. Then all the other coefficients are uniquely determined.
\end{proof}

Using this fact, we now prove a couple of auxiliary results that combine to count the number of decompositions for complete multipartite graphs.

\begin{Lemma}
\label{lem:no neighbors}
In a type $II$ decomposition, no vertex with weight $1$ may share a neighbor with a vertex of weight $-1$.
\end{Lemma}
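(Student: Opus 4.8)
The plan is to argue by contradiction, using the two structural facts we have already established for type $II$ decompositions: the sign constraints of Corollary~\ref{decomposition types} and the vertex-disjointness of opposite-sign edges from Corollary~\ref{opp}. Suppose, toward a contradiction, that there exist a vertex $u$ with weight $a_u = 1$ and a vertex $v$ with weight $a_v = -1$ that share a common neighbor $w$.

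First I would pin down the weight of $w$. By Corollary~\ref{decomposition types}, in a type $II$ labeling all positive edges have signature $\{1,0\}$ and all negative edges have signature $\{-1,0\}$, so there are no edges of signature $\{1,1\}$ or $\{-1,-1\}$; equivalently, the $1$-weighted vertices form an isolated set and the $(-1)$-weighted vertices form an isolated set. Since $w$ is adjacent to $u$ (weight $1$), we cannot have $a_w = 1$; since $w$ is adjacent to $v$ (weight $-1$), we cannot have $a_w = -1$. As all weights lie in $\{0,1,-1\}$ by Proposition~\ref{Yan:hyperplane}, this forces $a_w = 0$.

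Now the edge $(u,w)$ has signature $\{1,0\}$ and is therefore a positive edge, while the edge $(v,w)$ has signature $\{-1,0\}$ and is therefore a negative edge. But $(u,w)$ and $(v,w)$ share the vertex $w$, contradicting Corollary~\ref{opp}, which asserts that a positive edge and a negative edge can never share a vertex. This contradiction completes the proof.

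I do not anticipate a serious obstacle here: the entire argument rests on deducing $a_w = 0$ from the isolated-set property and then invoking the vertex-disjointness of opposite-sign edges. The only point requiring care is to justify cleanly that $a_w$ cannot be $\pm 1$, which follows directly from Corollary~\ref{decomposition types} rather than requiring any fresh appeal to the $4$-cycle condition of Lemma~\ref{edgeofedgepolytope}.
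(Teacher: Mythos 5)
Your proof is correct and follows exactly the paper's argument, just spelled out in more detail: the paper likewise deduces from Corollary~\ref{decomposition types} that the common neighbor must have weight $0$, and then obtains the contradiction of a positive edge and a negative edge sharing a vertex (Corollary~\ref{opp}). The extra care you take in ruling out $a_w = \pm 1$ via the isolated-set property is a faithful elaboration of what the paper leaves implicit.
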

\begin{proof}
By Corollary~\ref{decomposition types}, such a neighbor must be $0$. However, we now have a positive edge next to a negative one, a contradiction.
\end{proof}

\begin{Proposition}
\label{prop:extension} For a connected bipartite graph $G$ with a type $II$ separating hyperplane, there exists a type $I$ separating hyperplane which gives the same decomposition.
\end{Proposition}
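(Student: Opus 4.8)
The plan is to use the bipartition of $G$ to turn the given type $II$ hyperplane into one with no zero weights, which by Corollary~\ref{decomposition types} must be type $I$. Write the bipartition as $V(G) = X \sqcup Y$, and let $a_i \in \{0,1,-1\}$ be the weights of the given type $II$ hyperplane $\Hc$. The first step is a \emph{bipartite shift}: set $a'_i = a_i - 1/2$ for $i \in X$ and $a'_i = a_i + 1/2$ for $i \in Y$. For every edge $(i,j)$ with $i \in X$ and $j \in Y$ one has $a'_i + a'_j = a_i + a_j$, so the weight of every edge is unchanged; hence the hyperplane $\Hc'\colon \sum_i a'_i x_i = 0$ assigns the same sign to each edge and gives exactly the same decomposition as $\Hc$. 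Rescaling, I would work with $a''_i = 2a'_i$, so that $a''_i = 2a_i - 1$ on $X$ and $a''_i = 2a_i + 1$ on $Y$; the resulting weights lie in $\{-3,-1,1,3\}$. The entire goal is then to show that the values $\pm 3$ never occur, so that $\Hc'$ is in fact a type $I$ hyperplane realizing the same decomposition.

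The crux — and the step I expect to be the main obstacle — is to control how the weight-$1$ and weight-$(-1)$ vertices are distributed between $X$ and $Y$; this is exactly where the $4$-cycle condition, and not merely bipartiteness, must be used. Here I would apply Proposition~\ref{Yan:hyperplane} to the separating hyperplane $\Hc'$: all positive edges must share a single signature $\{a,b\}$, and all negative edges the signature $\{-a,-b\}$. Under the shift, a positive edge (a weight-$1$–weight-$0$ edge) has signature $\{1/2,1/2\}$ if its weight-$1$ endpoint lies in $X$, but signature $\{3/2,-1/2\}$ if that endpoint lies in $Y$. Since these two sets are distinct, uniformity forces all positive edges to have their weight-$1$ endpoint on one and the same side. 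A parallel computation for the negative edges (whose signature is determined to be $\{-a,-b\}$) shows that their weight-$(-1)$ endpoints lie on the opposite side. After possibly swapping the names of $X$ and $Y$, I may thus assume that every positive edge has its weight-$1$ endpoint in $X$ and every negative edge has its weight-$(-1)$ endpoint in $Y$.

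With this side-normalization in hand, the conclusion follows by a connectivity argument. Tracking the six combinations of (side, weight) shows that $a'' = -3$ holds exactly on the weight-$(-1)$ vertices of $X$ and $a'' = +3$ exactly on the weight-$1$ vertices of $Y$. By the previous paragraph, a weight-$1$ vertex of $Y$ lies on no positive edge; since weight-$1$ vertices form an independent set (Corollary~\ref{decomposition types}), it can have no weight-$1$ neighbor and no weight-$0$ neighbor, so all of its neighbors are weight-$(-1)$ vertices of $X$. In other words, every $+3$ vertex is adjacent only to $-3$ vertices, and symmetrically every $-3$ vertex is adjacent only to $+3$ vertices. Hence the set of $\pm 3$ vertices is a union of connected components of $G$.

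To finish, I would invoke connectedness together with the fact that a decomposable polytope has at least one positive edge (Corollary~\ref{opp} and the discussion preceding it). The endpoints of such a positive edge carry weights $1$ and $1$ under $a''$, hence are not $\pm 3$, so $G$ has at least one vertex outside the $\pm 3$ set. Connectedness then forces the $\pm 3$ cluster to be empty, so $a''$ takes only the values $\pm 1$; thus $\Hc'$ is the required type $I$ separating hyperplane giving the same decomposition. I anticipate that the side-distribution lemma of the second paragraph — the appeal to Proposition~\ref{Yan:hyperplane} — will carry essentially all of the difficulty, while the shift and the final connectivity argument are routine.
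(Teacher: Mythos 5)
Your proof is correct, but it is organized rather differently from the paper's. The paper works in the opposite order: it first pins down the combinatorics, using Corollary~\ref{opp} to show that every positive edge has its weight-$1$ endpoint in one class of the bipartition ($L$ in its notation, your $X$) and every negative edge its weight-$(-1)$ endpoint in the other, then places the weight-$\pm 1$ vertices that lie only on zero edges by an explicit alternating-path argument, and only at the very end defines the re-weighting (zero weights in $R$ become $+1$, zero weights in $L$ become $-1$) and checks the four edge types one by one to confirm that no sign changes. You instead perform the re-weighting \emph{first}, as the bipartite shift $a_i \mapsto a_i \mp 1/2$, so that preservation of all edge weights --- hence of the decomposition --- is automatic from bipartiteness and no case check is ever needed; the work is then transferred to showing that the shifted weights avoid the values $\pm 3/2$ (your $\pm 3$ after scaling). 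For that you invoke the signature-uniformity of Proposition~\ref{Yan:hyperplane} applied to the shifted hyperplane --- a nice trick which repackages exactly what the paper extracts from Corollary~\ref{opp}, since that corollary is what drives the proof of Proposition~\ref{Yan:hyperplane} --- and you replace the paper's path argument by the observation that the $\pm 3$ vertices are closed under adjacency, hence form a union of connected components, hence are empty because $G$ is connected and the endpoints of any positive edge are not $\pm 3$. The two arguments terminate in literally the same type $I$ hyperplane. What your route buys is that the ``same decomposition'' verification comes for free and the connectivity step is cleaner; the only point you should make explicit is that the shifted hyperplane really is a separating hyperplane (so that Proposition~\ref{Yan:hyperplane} applies to it), which is immediate since the two linear forms agree on every vertex $\rho(e)$ and hence, by linearity, on all of $\Pc_G$.
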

\begin{proof}
Let the bipartition of the vertices be $L \cup R$. By Corollary~\ref{decomposition types}, positive edges must have signature $\{1, 0\}$ and negative edges must have signature $\{0, -1\}$. Without loss of generality there is one positive edge $(i,j)$ with $i \in L$ and $j \in R$, such that $a_i = 1$ and $a_j = 0$. Then by Corollary~\ref{opp}, all negative edges $(k, \ell)$ with $a_k = 0$ and $a_\ell = -1$ must have $k \in L$ and $\ell \in R$. By the same reasoning all positive edges must have their weight-$1$ vertex in $L$ and weight-$0$ vertex in $R$. 

At this point, we have covered all vertices involved in nonzero edges, so only zero edges remain. Consider a vertex $i$ with weight $1$ that we have not yet considered. If it exists, since $G$ is connected, there must exist a path starting at $i$ with weights $1, -1, 1, -1, \ldots$ until we get to a vertex next to a nonzero edge. It is then clear that $i$ must be in $L$. Using this and a symmetric argument repeatedly, we see that all vertices with weight $1$ must be in $L$ and all vertices with weight $-1$ must be in $R$. 

Finally, consider the following transformation: change all zero weights in $R$ to $1$ and all zero weights in $L$ to $-1$. We claim that no edge will change sign: any positive edge (with signature $\{1, 0\}$) will change to signature $\{1, 1\}$,  any negative edge (with signature $\{0, -1\}$) will change to signature $\{-1,-1\}$, any zero edge with signature $\{0, 0\}$ will change to $\{-1, 1\}$, and any zero edge with signature $\{1, -1\}$ is unchanged. The result is then a type $II$ hyperplane which gives the same decomposition. 
\end{proof}

\begin{Theorem}\label{commul}Let $G$ be a complete multipartite graph on vertices
$[d]=V_1\cup \dots\cup V_k$ with $k \geq 2$. Then the number of decompositions is
$$ 2^{d-1} - \sum_i (2^{|V_i|} -1) -1. $$
\end{Theorem}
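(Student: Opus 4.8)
The plan is to reduce the problem to counting \emph{type $I$} decompositions and then to run an inclusion--exclusion over sign assignments. First I would show that for a complete multipartite graph every decomposition may be taken of type $I$. By Corollary~\ref{decomposition types} the only alternative is type $II$, in which the weight-$1$ and weight-$(-1)$ vertices form independent sets; in a complete multipartite graph an independent set lies inside a single part, so the $+1$'s occupy one part and the $-1$'s another (possibly the same). Since a genuine type $II$ decomposition needs at least one positive and one negative edge, both a $+1$ vertex and a $-1$ vertex are present, and together they occupy at most two parts. When $k\geq 3$ some part $V_r$ then consists entirely of weight-$0$ vertices; any $w\in V_r$ is adjacent both to a $+1$ vertex and to a $-1$ vertex, producing a positive edge and a negative edge sharing $w$, which contradicts Corollary~\ref{opp}. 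Hence no type $II$ decomposition exists for $k\geq 3$, while for $k=2$ every type $II$ decomposition already coincides with a type $I$ one by Proposition~\ref{prop:extension}. In all cases it therefore suffices to count type $I$ decompositions, which by Lemma~\ref{unique} correspond bijectively to unordered partitions $[d]=P\sqcup N$ (up to the global swap $P\leftrightarrow N$) induced by the weights.

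Next I would pin down which partitions are admissible. A partition gives a genuine decomposition exactly when there is at least one positive edge and at least one negative edge, i.e.\ when both $P$ and $N$ meet at least two of the parts $V_1,\dots,V_k$. The pleasant simplification special to complete multipartite graphs is that the $4$-cycle condition imposes no further constraint: a positive edge $e=(i,j)$ and a negative edge $f=(k,\ell)$ are automatically vertex-disjoint (their endpoints carry opposite weights), and since the endpoints of $e$ lie in distinct parts $a\neq b$ and those of $f$ in distinct parts $c\neq d$, a short case analysis shows $\{i,j,k,\ell\}$ always contains a $4$-cycle. Thus $e$ and $f$ are cycle-compatible, and by Lemma~\ref{edgeofedgepolytope} no $\convy(e,f)$ crosses $\Hc$; by Corollary~\ref{opp} every such partition is a valid decomposition.

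Finally I would count the admissible partitions by inclusion--exclusion on the $2^d$ sign assignments $[d]\to\{+,-\}$. Let $A$ be the assignments whose positive class meets at most one part, and $B$ the analogous set for the negative class. A class meets at most one part exactly when it is empty or contained in a single $V_i$, so $|A|=|B|=1+\sum_i(2^{|V_i|}-1)$. The number of valid assignments is $2^d-|A|-|B|+|A\cap B|$, and dividing by $2$ to pass to unordered partitions (the sign swap is fixed-point-free on valid assignments) gives $2^{d-1}-\bigl(1+\sum_i(2^{|V_i|}-1)\bigr)+\tfrac12|A\cap B|$.

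The main obstacle is the overlap term $|A\cap B|$. When $k\geq 3$ the two classes cannot each meet $\leq 1$ part while jointly covering all $k\geq 3$ parts, so $|A\cap B|=0$ and the expression collapses to exactly $2^{d-1}-\sum_i(2^{|V_i|}-1)-1$, as claimed. I expect the genuinely delicate point to be the bipartite boundary case $k=2$: there the two assignments realizing the partition $\{V_1,V_2\}$ lie in $A\cap B$, so this term contributes and must be tracked against the type $II$ reduction of Proposition~\ref{prop:extension}. Reconciling that boundary contribution with the stated formula is where I would concentrate the most care.
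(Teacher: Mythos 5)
Your proposal follows the same route as the paper's proof: reduce to type $I$ hyperplanes (Proposition~\ref{prop:extension} for $k=2$, a common-neighbor argument as in Lemma~\ref{lem:no neighbors} for $k\geq 3$), identify decompositions with sign assignments modulo the global flip via Lemma~\ref{unique}, and count the assignments producing at least one positive and one negative edge. You are more careful than the paper in two respects: you verify that a positive edge and a negative edge in a complete multipartite graph are automatically cycle-compatible (the paper silently assumes that every assignment with both kinds of edges yields a decomposition), and you carry out the inclusion--exclusion honestly, retaining the overlap term $|A\cap B|$.

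That overlap term is precisely where the paper's proof goes astray, so the ``delicate point'' you flag at $k=2$ is not something to reconcile with the stated formula: your count is correct and the stated formula is not. The paper counts the bad assignments as $2+\sum_i 2(2^{|V_i|}-1)$, which for $k=2$ subtracts twice the two assignments in which one part is entirely $+1$ and the other entirely $-1$ (these give only zero edges, hence no decomposition, but they lie in both $A$ and $B$); its answer is therefore too small by $\tfrac{1}{2}|A\cap B|=1$ when $k=2$, and the correct count in that case is $2^{d-1}-\sum_i(2^{|V_i|}-1)$. A concrete check: take $G=K_{2,2}=C_4$ with parts $\{1,3\}$ and $\{2,4\}$, so $d=4$. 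The edge polytope is a square whose four sides are the $\convy(e,f)$ with $e\cap f\neq\emptyset$ and whose two diagonals are the $\convy(e,f)$ for the two pairs of opposite edges of the cycle; cutting along either diagonal gives an integral decomposition, so $\Pc_{C_4}$ has exactly $2$ decompositions (the paper's own example in Section 2 exhibits one of them, and the other follows by the rotational symmetry of the cycle), whereas the theorem predicts $2^{3}-(3+3)-1=1$. So your proof is complete for $k\geq 3$, where $|A\cap B|=0$ and the formula holds; for $k=2$ the theorem itself requires the correction above, and no additional care in the argument will recover the stated formula.
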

\begin{proof}

First, we show that we only need to consider type $I$ hyperplanes. The proof is different for the cases $k = 2$ and $k > 2$: when $k = 2$, we appeal to Proposition~\ref{prop:extension}. When $k > 2$, every two vertices share a neighbor, so we use Lemma~\ref{lem:no neighbors}. 

There are $2^d$ ways of assigning $1$ or $-1$ to all the vertices. Each of these assignments gives a decomposition unless:
\begin{enumerate}
\item all vertices have the same weight, or
\item both weights occur, but all vertices of one of the weights are inside a single $V_i$ (in which case we do not have an edge of the corresponding sign). 
\end{enumerate} 
There are $2$ assignments for the first case, and $2(2^{|V_i|} - 1)$ ways of confining all weights of some sign (and having at least one) into $V_i$. Summing and dividing by $2$ (using Lemma~\ref{unique}) gives the desired answer.

\end{proof}

In the special case of a complete graph (i.e.  $|V_i| = 1$ for all $i$), this reduces to $2^{d-1} - d - 1$. We remark that this recovers  a special case of Theorem 5.3 in \cite{HJ} when $k=2$, which studies the splitting of hypersimplices $\Delta(k,n)$.

\section{Normal edge polytopes}

Recall that an integral convex polytope $\Pc \subset \RR^d$ is
{\em normal} if, for all positive integers $N$ and for all
$\beta \in N \Pc \cap \ZZ^d$, where $N \Pc = \{ N \alpha : \alpha
\in \Pc \}$, there exists $\beta_1, \ldots, \beta_N$ belonging to
$\Pc \cap \ZZ^d$ such that $\beta = \sum_i \beta_i$.

Now, consider the following condition on $G$: If $C$ and $C'$ are cycles of $G$ of odd length, then either they
share a vertex or there is an edge $(i, j)$ of $G$ such that $i \in C$
and $j \in C'$ (such an edge is
called a {\em bridge} between $C$ and $C'$). This condition is called the {\em odd cycle
condition}, which was first investigated by \cite{FHM} in classical
combinatorics. Its relevance to our situation is the following result, which changes the algebraic condition into a combinatorial condition:

\begin{Proposition}[{Corollary 2.3 of \cite{OHnormal}}]
\label{oddcyclecondition}
$\Pc_G$ is normal if and only if $G$ satisfies the odd cycle condition.
\end{Proposition}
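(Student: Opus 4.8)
The plan is to prove both implications by translating normality into a purely combinatorial statement about integral realizations of degree vectors, after which odd cycles emerge as the only obstruction. First I would record the elementary fact that the only lattice points of $\Pc_G$ are the vertices $\rho(e)$, $e \in E(G)$: since $\Pc_G$ lies in $\{\sum_i x_i = 2\} \cap \RR^d_{\geq 0}$, whose lattice points are the vectors $\eb_i + \eb_j$, one checks that $\eb_i + \eb_j \in \Pc_G$ exactly when $(i,j) \in E(G)$. Consequently a point $\beta = (b_1, \dots, b_d) \in N\Pc_G \cap \ZZ^d$ is a nonnegative integer vector with $\sum_i b_i = 2N$, and normality of $\Pc_G$ becomes the assertion that every such $\beta$ admitting a \emph{fractional} edge-weighting --- reals $c_e \geq 0$ with $\sum_e c_e = N$ and $\sum_{e \ni i} c_e = b_i$ --- also admits an \emph{integral} one; that is, $\beta$ is the degree vector of a multiset of $N$ edges of $G$.

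For necessity I would argue the contrapositive. Suppose the odd cycle condition fails, so there are vertex-disjoint odd cycles $C$ and $C'$ of lengths $2p+1$ and $2q+1$ with no bridge between them. Set $N = p+q+1$ and let $\beta$ be the indicator vector of $V(C) \cup V(C')$; assigning weight $\tfrac12$ to every edge of $C$ and $C'$ exhibits $\beta \in N\Pc_G \cap \ZZ^d$. In any hypothetical integral decomposition of $\beta$, every edge used has both endpoints in $V(C) \cup V(C')$, and since there is no bridge and the cycles are disjoint, no such edge joins $V(C)$ to $V(C')$. Hence the integer weights $c_e$ restricted to the $C$-side would satisfy $2\sum_{e \subseteq V(C)} c_e = \sum_{i \in V(C)} b_i = 2p+1$, which is odd --- a contradiction. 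Thus $\Pc_G$ is not normal.

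For sufficiency I would assume the odd cycle condition and take any fractionally realizable $\beta \in N\Pc_G \cap \ZZ^d$. The idea is to pass to a vertex $c^{*}$ of the polytope $Q_\beta = \{c \in \RR^{E(G)}_{\geq 0} : \sum_{e \ni i} c_e = b_i \text{ for all } i\}$ of fractional edge-weightings realizing $\beta$, and invoke the classical structure theorem for the unsigned vertex--edge incidence matrix of a graph: every vertex of $Q_\beta$ is half-integral, and the edges on which $c^{*}$ is non-integral form a disjoint union of odd cycles $C_1, \dots, C_m$. If $m = 0$ then $c^{*}$ is an integral decomposition and we are done; moreover, a parity count on $\sum_e c^{*}_e = N$ (each $C_i$ contributes an odd number of half-integral edges) forces $m$ to be even. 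When $m \geq 2$, the odd cycle condition supplies a bridge between the disjoint cycles $C_1$ and $C_2$, and I would use it to re-solve the degree constraints on $V(C_1) \cup V(C_2)$ integrally: routing one extra unit across the bridge cancels the odd parity of each side, and the two resulting even paths are then matched, all while leaving the remaining weights of $c^{*}$ untouched and preserving both every vertex degree and the total weight $N$. This strictly decreases the number of fractional odd cycles, and iterating yields an integral decomposition, proving normality.

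The main obstacle is the sufficiency direction, and within it the two structural inputs: establishing (or citing) the half-integrality of the vertices of $Q_\beta$ together with the identification of their non-integral support as vertex-disjoint odd cycles, and verifying carefully that the bridge-augmentation is degree- and weight-preserving while strictly reducing the number of fractional cycles. The necessity direction, by contrast, reduces to the short parity argument above. I would also need to confirm at the outset the lattice-point description $\Pc_G \cap \ZZ^d = \{\rho(e) : e \in E(G)\}$, which underpins the entire combinatorial translation.
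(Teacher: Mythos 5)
The paper does not actually prove this proposition: it is imported verbatim as Corollary 2.3 of \cite{OHnormal}, so there is no internal proof to compare yours against. Judged on its own, your proposal is essentially correct and is a genuine reconstruction rather than a circular appeal to the literature. The translation of normality into ``every fractionally realizable integer degree vector is integrally realizable'' is valid, and your preliminary claim that the lattice points of $\Pc_G$ are exactly the $\rho(e)$ is easily verified. The necessity direction, via the indicator vector of $V(C)\cup V(C')$ realized with weight $\tfrac12$ on the cycle edges and the parity count $2\sum_{e \subseteq V(C)} c_e = 2p+1$, is exactly the standard witness of non-normality. The sufficiency direction is a correct fractional $b$-matching argument: vertices of the polytope $Q_\beta$ of fractional realizations are half-integral with fractional support a disjoint union of odd cycles; the number $m$ of such cycles is even because the total weight $N$ is an integer; and, when $m \geq 2$, the odd cycle condition supplies a bridge between two of these (necessarily vertex-disjoint) cycles, along which the standard augmentation --- alternate $\pm\tfrac12$ on the closed even walk traversing $C_1$, the bridge, $C_2$, and the bridge again, so the bridge gains $1$ and every degree is preserved --- makes that pair integral while keeping nonnegativity (fractional values are at least $\tfrac12$) and leaving the other cycles untouched. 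The two gaps you flagged are indeed the only ones requiring care: the half-integrality and odd-cycle-support structure theorem must be proved or cited (it is classical; at a vertex of $Q_\beta$ the support has linearly independent incidence vectors, hence is a forest plus vertex-disjoint odd cycles), and the vague ``matched even paths'' phrasing should be replaced by the explicit alternating walk above, noting also that the bridge itself carries an integer weight since the fractional cycles are vertex-disjoint. Historically, your route is close in spirit to the combinatorial theorem of Fulkerson--Hoffman--McAndrew \cite{FHM}, which the paper credits for the odd cycle condition, whereas \cite{OHnormal} argues in the language of edge rings and integral closure; both yield the same statement, but yours stays purely polyhedral and combinatorial.
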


With this result, we now give the main theorem of this section, showing that normality is a hereditary condition on edge polytopes under decomposition:

\begin{Theorem}\label{Yan:Normality} Let $G$ be a finite connected simple graph on
$[d]$ and suppose that the edge polytope decomposes as $\Pc_G = \Pc_{G_+} \cup \Pc_{G_-}$. Then $\Pc_G$ is normal if and only if both
$\Pc_{G_+}$ and $\Pc_{G_-}$ are normal.
\end{Theorem}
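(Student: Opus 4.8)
The plan is to use the combinatorial criterion of Proposition~\ref{oddcyclecondition}: $\Pc_G$ is normal if and only if $G$ satisfies the odd cycle condition (OCC). So the theorem reduces entirely to a graph-theoretic statement: $G$ satisfies the OCC if and only if both $G_+$ and $G_-$ do, where (by Lemma~\ref{subpolytope}) $G_+$ and $G_-$ are the connected spanning subgraphs with $E(G_+) = \{e : s_\Hc(e) \geq 0\}$ and $E(G_-) = \{e : s_\Hc(e) \leq 0\}$. Throughout I would fix the hyperplane in the normalized form of Corollary~\ref{decomposition types}, so every vertex has weight in $\{0, 1, -1\}$ and every positive (resp. negative) edge has both endpoints with nonnegative (resp. nonpositive) weight. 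The key structural observation driving everything is that $G_+$ and $G_-$ differ from $G$ only by discarding, respectively, the negative and the positive edges, while both retain all zero edges.

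For the forward direction, assume $G$ satisfies the OCC. Since $G_+$ is a subgraph of $G$ on the same vertex set, any two odd cycles $C, C'$ in $G_+$ are also odd cycles in $G$, so they share a vertex or have a bridge in $G$. I would need to check that such a bridge can be taken inside $G_+$. This is where the sign structure helps: an odd cycle of $G_+$ uses only edges of weight $\geq 0$, and I expect to argue that its vertices all carry weight in $\{0,1\}$ (a weight $-1$ vertex cannot sit on a positive or zero edge of $G_+$ except possibly via a $\{1,-1\}$ zero edge, which I would rule out on an odd cycle by a parity/charge-counting argument). If the bridge $(i,j)$ of $G$ has both endpoints in such nonnegative-weighted cycles, then $a_i + a_j \geq 0$, so the bridge already lies in $G_+$. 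The same argument applies symmetrically to $G_-$.

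For the converse, assume both $G_+$ and $G_-$ satisfy the OCC, and take odd cycles $C, C'$ in $G$. Every edge of $G$ lies in $G_+$ or $G_-$ (or both, if it is a zero edge), but an individual cycle may mix positive and negative edges, so the cycles need not live in a single subgraph. The main obstacle is precisely this: I must produce, from an odd cycle of mixed sign in $G$, a controlled odd cycle inside one of $G_+$, $G_-$ to feed into their OCC. The tool for this is Corollary~\ref{opp}: a positive edge and a negative edge that are both present force a $4$-cycle with two zero edges, so positive and negative edges are tightly interleaved by zero edges. I would use this to rewrite or reroute a mixed odd cycle—swapping a positive edge for the two zero edges of its associated $4$-cycle, which preserves the vertex set up to the $4$-cycle and changes length by an even amount, hence preserves odd parity—until the cycle lies entirely in $G_+$ (or entirely in $G_-$). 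Once $C$ and $C'$ are realized as odd cycles in the same subgraph, its OCC yields a shared vertex or a bridge, and a final check confirms this shared vertex or bridge transfers back to $G$. I anticipate the bookkeeping in this rerouting step—ensuring the rewritten walk is still a genuine cycle and that bridges obtained in $G_+$ remain bridges in $G$—to be the most delicate part of the argument.
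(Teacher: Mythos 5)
Your reduction to the odd cycle condition via Proposition~\ref{oddcyclecondition} is exactly the paper's starting point, but both halves of your graph-theoretic argument have genuine gaps. In the direction ``$G$ satisfies the odd cycle condition $\Rightarrow$ $G_+$ does,'' your key claim --- that every vertex of an odd cycle of $G_+$ carries weight in $\{0,1\}$, so that any $G$-bridge between two such cycles automatically has nonnegative weight --- is false, and no parity/charge-counting argument can rescue it: the number of $\{1,-1\}$ zero edges around a cycle is merely \emph{even}, which is perfectly compatible with odd cycle length. Concretely, in a type $I$ decomposition a $5$-cycle $v_1\cdots v_5$ with vertex weights $(1,1,1,1,-1)$ has edge signatures $\{1,1\},\{1,1\},\{1,1\},\{1,-1\},\{-1,1\}$, so it lies entirely in $G_+$ yet contains a weight $-1$ vertex; this situation is realizable in a genuinely decomposable graph (add a negative edge $(w_1,w_2)$ with both $w_i$ of weight $-1$, together with the zero edges $(v_1,w_2)$, $(v_2,w_1)$, $(v_3,w_2)$, $(v_4,w_1)$, so that every positive--negative pair is cycle-compatible). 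Parity forces weights to vanish only when \emph{all} edges of the odd cycle are zero edges, and that is precisely how the paper uses it: it assumes $C,C'$ are disjoint odd cycles of $G_+$ with no bridge inside $G_+$, concludes every $G$-bridge between them is negative, then applies Corollary~\ref{opp} to a hypothetical positive cycle edge and a negative bridge to manufacture a zero-edge bridge (which would lie in $G_+$, contradiction), forcing all cycle edges to be zero; only then does oddness give $a_i=0$ on both cycles, making a negative bridge impossible. Your proposal is missing this contradiction structure entirely.

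In the other direction your rerouting idea also does not close. Replacing a positive edge $e=(u,v)$ of a mixed odd cycle by the path $u$--$y$--$x$--$v$ through a negative edge $f=(x,y)$ (using Corollary~\ref{opp}) produces a closed odd walk, not necessarily a simple cycle, and, more importantly, the rerouted cycles pass through new vertices. The shared vertex or bridge that the odd cycle condition of $G_-$ then supplies may involve only these new vertices, and there is no evident way to transfer it back to a shared vertex or bridge of the original $C,C'$ in $G$; this is exactly the ``delicate bookkeeping'' you flag, and it is where the approach breaks rather than a detail to be checked. The paper avoids rerouting altogether: it argues by contradiction, chooses the two offending odd cycles of $G$ \emph{minimal} (hence chordless), and notes that if all their edges lie in one of $G_{\pm}$ then that subgraph already violates the odd cycle condition, while otherwise a positive and a negative edge among their edges would, by Corollary~\ref{opp}, force either a chord (if both lie in one cycle) or two zero-edge bridges (if they lie in different cycles) --- contradictions in every case. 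You would need to import that minimality/chordlessness device, or something equivalent, to make your direction work.
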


\begin{proof}
Recall that $G_+$ has all the zero and positive edges and $G_-$ has all
the zero and negative edges.

Suppose that both $\Pc_{G_+}$ and $\Pc_{G_-}$ are
normal but $\Pc_G$ is not. It follows from the odd cycle
condition that $G$ contains two disjoint odd cycles $C$ and $C'$
with no bridge. Without loss of generality, we may assume that these are minimal in the sense that we cannot pick a smaller odd cycle to replace $C$ while keeping $C'$ fixed such that they are still disjoint and bridgeless (and vice-versa).
In particular, it follows that neither cycle contains a chord, since then a smaller odd cycle could have been selected.
If either $G_+$ or $G_-$ contains all the edges in
both cycles, then $\Pc_G$ cannot be normal. Thus there is at least one
positive and at least one negative edge in the edges of $C$ and $C'$. If they
are in the same cycle $C$, then since the two edges induce a cycle
of length $4$, $C$ must have a chord, a contradiction.
If they are in different cycles, then a cycle of
length $4$ between them introduces two bridges that are zero edges,
which is also a contradiction. Thus $\Pc_G$ is normal.

Conversely, suppose that $\Pc_G$ is normal and that
$\Pc_{G_+}$ is nonnormal. Since $\Pc_G$
is normal, there is at least one bridge between $C$ and $C'$ in $G$. Again by the odd cycle condition, the
subgraph $G_+$ contains two disjoint odd cycles $C$ and $C'$ such
that no bridge between $C$ and $C'$ belongs to $G_+$.
Therefore, all of the bridges between $C$ and $C'$ in $G$ must be negative
edges. We claim all of the edges of $C$ and $C'$ must be zero edges.
In fact, if we had an edge $e$ of either $C$ or $C'$ which is
nonzero, then since $e$ belongs to $G_+$ it must be positive,
and a cycle of length $4$ arises
from $e$ and a (negative) bridge, yielding a zero edge bridge
between $C$ and $C'$, a contradiction.

Now, let $a_1 x_1 + \cdots + a_d x_d = 0$ define the separating hyperplane $\Hc$.  Since all of the edges of
$C$ and $C'$ must be zero edges, one has $a_i + a_j = 0$ if $( i, j
)$ is an edge of either $C$ or $C'$.  However, since both $C$ and
$C'$ are odd, it follows that $a_i = 0$ for all vertices $i$ of $C$
and $C'$. Hence there exists no negative bridge between $C$ and
$C'$, which gives a contradiction.  Hence $\Pc_{G_+}$ is normal.
Similarly, $\Pc_{G_-}$ is normal.
\end{proof}

\begin{Example}
\label{Nan:example}
{\em
When $\Pc_G$ is not normal, different types of decompositions can occur, as shown in the following examples:
\begin{enumerate}
\item It is possible for a nonnormal edge polytope to be decomposed into two nonnormal polytopes. Let $G$ be the finite graph with the following decomposition:

\begin{center}

\includegraphics{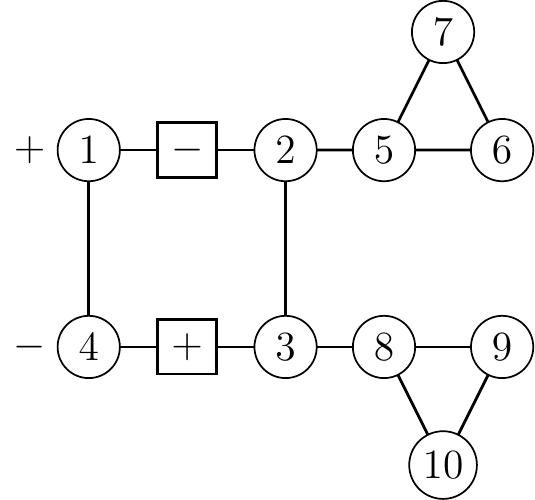}
\end{center}
$\Pc_G$ is not normal because of the two 3-cycles $C_1=(5,6,7)$ and $C_2=(8,9,10)$.
Both subpolytopes $\Pc_{G_+}$ and $\Pc_{G_-}$ are also nonnormal.

\item It is possible for a nonnormal edge polytope to be decomposed into one normal polytope and one nonnormal
polytope. Let $G$ be the finite graph with the following decomposition:

\begin{center}
\includegraphics{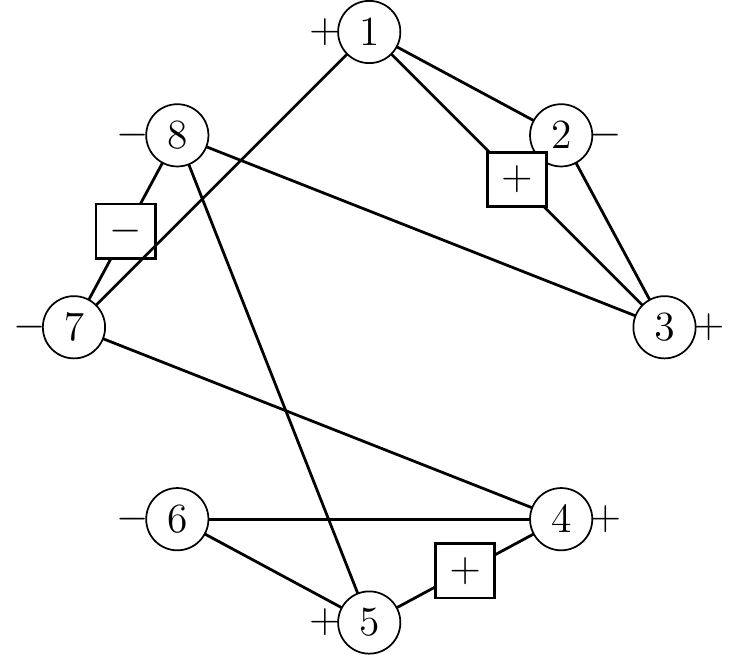}
\end{center}
$\Pc_{G_-}$ is normal and $\Pc_{G_+}$ is nonnormal.

\end{enumerate}

}

\end{Example}

\section{Quadratic toric ideals}
Fix a graph $G$. Let $K[\textbf{t}]=K[t_1,\dots,t_d]$ and $K[\textbf{x}]=K[x_1,\dots,x_n]$ be the polynomial rings over a field $K$ in $d$ and $n$ variables respectively. For each edge $e=\{i,j\} \in E(G)$, we write $\textbf{t}^{e}$ for the squarefree quadratic monomial $t_it_j\in K[\textbf{t}]$, and define the \emph{edge ring} $K[G]$ of $G$ to be the subalgebra of $K[\textbf{t}]$ generated by $\textbf{t}^{e_1}, \textbf{t}^{e_2}, \dots, \textbf{t}^{e_n}$ over $K$. Now define the surjective homomorphism of semigroup rings $\pi:\,K[\textbf{x}]\rightarrow K[G]$ by $\pi(x_i)=\textbf{t}^{e_i}$ for all $1\le i\le n$. The kernel of $\pi$ is called the \emph{toric ideal} of $G$ and denoted by $I_G$. We call a graph $G$ (or its edge polytope $\Pc_G$) \emph{\good{}} if its
toric ideal $I_G$ is generated by quadratic binomials. In the previous section, we studied how the normality property behaves under decomposition; in this section, we do the same with the \good{} property.

Given an even cycle $C = (1,2,\dots,k)$:
\begin{enumerate}
\item call $(i,j)$ an \emph{even (odd) chord} of $C$ if $(j-i) \neq 0$ is odd (even). The naming convention corresponds to the parities of the two subcycles created by $(i,j)$ inside $C$. For example, if $j - i = 2$, then we create a triangle, which is odd. The fact that $C$ is even makes this notion well-defined, as the two subcycles will have the same parity.
\item call $C$ \emph{long} if it has length at least $6$.
\item given two chords $e_1$ and $e_2$ of $C$, say that they \emph{cross} if they contain $4$ distinct vertices that, when we place them in order around $C$, gives the sequence $(a_1, a_2, a_3, a_4)$ where the set of edges $\{e_1, e_2\}$ is the set $\{(a_1, a_3), (a_2, a_4)\}$.
\item call a triple of chords $S = (c_1, c_2, c_3)$ of $C$ an \emph{odd-triple} if at least two of them \emph{cross}.
\end{enumerate}

As before, our strategy is to convert the criterion of being \good{} to that of a combinatorial criterion (Theorem~\ref{howgood}) on graphs, which we will use to prove our final main result, Theorem~\ref{good if both are good}.

\begin{Theorem}[{Theorem 1.2 of \cite{OHquadratic}}]\label{howgood}$G$ is \good{} if and only if the following two conditions hold:
\begin{enumerate}
\item For every even long cycle, there exists either an even chord
or an odd-triple.
\item The induced graph of any two odd-cycles are $2$-connected by bridges (namely, for any two odd cycles which share exactly one node, there
exists a bridge which does not go through the common node, and for any two odd cycles which do not share any nodes, there
exist two bridges).
\end{enumerate}

\end{Theorem}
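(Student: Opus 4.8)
The plan is to reduce the statement to a combinatorial analysis of a known generating set of $I_G$. Recall that the toric ideal of a finite graph is generated by the binomials $f_W = \prod x_e - \prod x_{e'}$ attached to even closed walks $W$, where the two products run over the odd-indexed and even-indexed edges of $W$; moreover, by the structure theory of such walks, a generating set is obtained from the even closed walks of three basic shapes: (i) an even cycle, (ii) two odd cycles meeting in exactly one vertex, and (iii) two vertex-disjoint odd cycles joined by a single path (a bridge). I would take this generating set as the starting point. Since the nontrivial \emph{quadratic} binomials in $I_G$ are exactly those coming from $4$-cycles, the theorem becomes the assertion that each generator $f_W$ of degree $\geq 3$ lies in the ideal $Q$ generated by the $4$-cycle binomials precisely when conditions (1) and (2) hold. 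I would prove the two implications by separately handling the even-cycle generators of shape (i), which are controlled by condition (1), and the odd-cycle-pair generators of shapes (ii) and (iii), which are controlled by condition (2).

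For the forward direction I would argue by induction on the length of $W$. If $W$ is an even cycle $C$ of length $\geq 6$ possessing an \emph{even chord} $(i,j)$, then, since $j-i$ is odd, that chord splits $C$ into two strictly shorter even cycles $C_1$ and $C_2$ (each of length $\geq 4$), and the standard splitting identity expresses $f_C$ as a monomial combination of $f_{C_1}$ and $f_{C_2}$; induction then places $f_C$ in $Q$. The more delicate case is when $C$ has no even chord but carries an \emph{odd-triple}: now every chord is odd and on its own only cuts $C$ into two odd cycles, which is useless, but I would show that a crossing pair of odd chords can be composed so that the odd contributions cancel, yielding strictly shorter even closed walks, with the third chord of the triple being what keeps this composition inside $I_G$ and drives it down to quadratics. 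For condition (2), when two odd cycles share a vertex, the promised bridge avoiding the common vertex creates a $4$-cycle that reduces the degree-$3$ ``bowtie'' binomial; when the two odd cycles are disjoint, the two promised bridges, together with the connecting path, again produce enough $4$-cycles to express $f_W$ inside $Q$.

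For the converse I would show that if either condition fails, $I_G$ has a minimal generator of degree $\geq 3$. If some even long cycle $C$ has neither an even chord nor an odd-triple, I would argue that $f_C$ is \emph{primitive} (irreducible in the sense of the Graver basis) and that no quadratic relation can participate in any representation of $f_C$, because any $4$-cycle meeting the support of $C$ would force either an even chord or a crossing pair of chords; hence $f_C$ cannot be rewritten modulo $Q$ and $G$ is not \good{}. The failure of condition (2) is handled analogously: a pair of odd cycles that is not $2$-connected by bridges produces a primitive binomial of degree $\geq 3$ that survives reduction by the $4$-cycle quadrics.

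The main obstacle I anticipate is the \emph{odd-triple} analysis, in both directions. On the forward side, turning a crossing pair of chords into an explicit, terminating reduction of $f_C$ requires careful bookkeeping of which monomials cancel and a check that every intermediate object is again a genuine even closed walk; on the converse side, proving that the simultaneous \emph{absence} of an even chord and of an odd-triple really obstructs every quadratic reduction is essentially a primitivity statement, and ruling out all possible cancellations among $4$-cycle binomials is the technical heart of the argument. By contrast, the odd-cycle-pair cases of condition (2) I expect to be comparatively mechanical once the bridge-induced $4$-cycles have been identified.
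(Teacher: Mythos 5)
The first thing to note is that the paper contains no proof of this statement: it is imported verbatim as Theorem~1.2 of Ohsugi--Hibi \cite{OHquadratic}, so there is no internal argument to compare yours against; what you have attempted is a reconstruction of the original Ohsugi--Hibi proof. Your skeleton does match theirs in outline: start from the generating set of $I_G$ given by even closed walks (even cycles, two odd cycles meeting in one vertex, two disjoint odd cycles joined by a path), observe that the quadratic binomials in $I_G$ are exactly the $4$-cycle relations, and decide which higher-degree generators reduce modulo those.

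However, the proposal has genuine gaps, and the converse for condition (1) is broken as stated. You argue that if an even long cycle $C$ has no even chord and no odd-triple, then ``any $4$-cycle meeting the support of $C$ would force either an even chord or a crossing pair of chords,'' and conclude that no quadric can participate in a representation of $f_C$. But a crossing pair of chords is \emph{not} excluded by the hypothesis: an odd-triple requires a third chord in addition to a crossing pair. Concretely, let $C$ be the $6$-cycle $(1,2,3,4,5,6)$ with exactly the two odd chords $(1,3)$ and $(2,4)$. These cross, so the graph contains the $4$-cycle $(1,2,4,3)$ and hence the quadric $q = x_{12}x_{34}-x_{13}x_{24}$, yet $C$ has no even chord and no odd-triple. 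Here $f_C = x_{12}x_{34}x_{56}-x_{23}x_{45}x_{16}$ is indeed not in $(q)$ --- in its multidegree the ideal $(q)$ is spanned by $x_{56}q$ alone --- but your argument cannot detect this, since a quadric meeting $C$ does exist; reduction by it merely toggles between two matchings without terminating, and proving that such reductions always get stuck is the actual technical content of this direction. Relatedly, primitivity cannot carry the argument: the binomial of an even cycle is primitive in \emph{every} graph, including quadratic ones, so primitivity never obstructs membership in the ideal generated by the quadrics. On the forward side, the odd-triple case is deferred outright (``I would show\dots''), and the claim that a bridge between two odd cycles sharing a vertex ``creates a $4$-cycle'' fails once the cycles are longer than triangles: two $5$-cycles sharing a vertex, with a bridge joining vertices far from the shared one, produce no $4$-cycle at all, so the reduction of such generators must instead proceed by a more involved induction using both hypotheses simultaneously. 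In short, the heart of the theorem, in both directions, remains unproved in your sketch.
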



\begin{Lemma}
\label{even cycle lonely pair} Let $C$ be an even cycle inside a
\good{} $G$, with edges $e_1, e_2, \ldots, e_{2k}$. Suppose $C$
contains exactly one positive $e_i$ and one negative edge $e_j$.
Then $j \cong i \pmod{2}$.
\end{Lemma}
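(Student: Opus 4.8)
The plan is to read the vertex weights off the separating hyperplane and to exploit an alternating-sign invariant along $C$ that is well-defined precisely because $C$ is even. First I would fix the hyperplane and normalize it via Proposition~\ref{Yan:hyperplane} and Corollary~\ref{decomposition types}, so that every vertex weight lies in $\{-1,0,1\}$ and each edge $(q,r)$ has sign equal to that of $a_q+a_r$. Relabel the vertices of $C$ cyclically as $1,2,\dots,2k$ with $e_\ell=(\ell,\ell+1)$ (indices mod $2k$) and write $a_\ell$ for the weight of vertex $\ell$. By hypothesis $a_i+a_{i+1}>0$, $a_j+a_{j+1}<0$, and $a_\ell+a_{\ell+1}=0$ for every other $\ell$. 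Corollary~\ref{opp} guarantees that the positive edge $e_i$ and the negative edge $e_j$ share no vertex, so the four indices $i,i+1,j,j+1$ are distinct and $e_i,e_j$ split $C$ into two nonempty arcs, each consisting entirely of zero edges.

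The key idea is to introduce the alternating weight $\epsilon_\ell:=(-1)^\ell a_\ell$. Because $C$ has even length, $(-1)^\ell$ is well-defined as $\ell$ runs around the cycle mod $2k$, so $\epsilon$ is a genuine function on the vertices of $C$; this is the one place the evenness of $C$ is essential. Across any zero edge $e_\ell$ one has $a_{\ell+1}=-a_\ell$, hence $\epsilon_{\ell+1}=(-1)^{\ell+1}(-a_\ell)=(-1)^\ell a_\ell=\epsilon_\ell$, so $\epsilon$ is constant along each of the two arcs of zero edges. Call these constant values $\alpha$ (on the arc with vertices $i+1,\dots,j$) and $\beta$ (on the arc with vertices $j+1,\dots,i$), so that $\epsilon_{i+1}=\epsilon_j=\alpha$ and $\epsilon_{j+1}=\epsilon_i=\beta$.

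It then remains to compare the two surviving inequalities. Unwinding the definition of $\epsilon$ at the endpoints of the nonzero edges gives $a_i+a_{i+1}=(-1)^i(\beta-\alpha)$ and $a_j+a_{j+1}=(-1)^j(\alpha-\beta)$. The positivity of $e_i$ forces $\alpha-\beta\neq 0$ together with $(-1)^i(\alpha-\beta)<0$, while the negativity of $e_j$ forces $(-1)^j(\alpha-\beta)<0$. Since $\alpha-\beta$ is a fixed nonzero real, both $(-1)^i$ and $(-1)^j$ must equal $-\operatorname{sign}(\alpha-\beta)$, whence $(-1)^i=(-1)^j$ and therefore $i\equiv j\pmod 2$, as claimed.

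The step I would be most careful about is the bookkeeping that makes $\epsilon$ well-defined around the entire cycle: one must check consistency across the wraparound edge $e_{2k}=(2k,1)$, which holds exactly because $(-1)^{2k}=(-1)^0$. This is what makes evenness indispensable and explains why the analogous statement fails for odd cycles. I would also note that this argument uses only the decomposition data—the vertex weights and the vertex-disjointness from Corollary~\ref{opp}—rather than the \good{} hypothesis directly; the latter is part of the ambient setting in which the lemma will be applied.
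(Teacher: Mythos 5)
Your proof is correct and is essentially the paper's argument repackaged: the paper supposes the parities differ and propagates the forced alternating weights $1,-1,1,\ldots$ along the arc of zero edges from the positive edge, reaching a contradiction at the negative edge --- which is exactly your observation that $\epsilon_\ell=(-1)^\ell a_\ell$ is constant along zero-edge arcs. Your formulation is direct rather than by contradiction and does not actually need the normalization of weights to $\{-1,0,1\}$, but the underlying mechanism (sign alternation along zero edges combined with the evenness of $C$) is identical.
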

\begin{proof}
Suppose not. Without loss of generality $e_1$ is positive and
$e_{2l}$ is negative. We can
assume that $C$ has vertices $c_1, \ldots, c_{2k}$ with $e_i = (c_i,
c_{i+1})$, where it is understood the vertices are labeled modulo $2k$,
and that either $c_1$ or $c_2$ is $1$. Without loss of generality
$c_2 = 1$. Since the edges $e_2, \ldots, e_{2l-1}$ are all zero
edges, we must have $c_3 = -1, c_4 = 1, c_5 = -1, \ldots, c_{2l} = 1$, which
is a contradiction because $e_{2l} = (c_{2l}, c_{2l+1})$ is negative.
\end{proof}

\begin{Theorem}
\label{good if both are good}
Let $\Pc_G=\Pc_{G_+}\cup \Pc_{G_-}$ be a decomposition. If $\Pc_{G_+}$ and $\Pc_{G_-}$ are both \good{}, then $\Pc_G$ is \good{}. 
\end{Theorem}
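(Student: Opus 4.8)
The plan is to verify, for $G$, the two combinatorial conditions of Theorem~\ref{howgood}, taking as input that both conditions already hold for the good subgraphs $G_+$ and $G_-$. Throughout I would exploit the structure of a decomposition recorded in Corollary~\ref{decomposition types} (weights in $\{0,1,-1\}$) and especially Corollary~\ref{opp}: a positive edge $e$ and a negative edge $f$ never share a vertex and are always cycle-compatible through a $4$-cycle $(e,g,f,h)$ whose remaining sides $g,h$ are zero edges. The organizing principle is that any cycle all of whose edges lie in $G_+$ (resp.\ $G_-$) inherits the desired property directly from the goodness of $G_+$ (resp.\ $G_-$) --- a chord or crossing configuration of such a cycle is computed in the same cyclic order in the subgraph and in $G$ --- so the genuine work is confined to cycles carrying \emph{both} a positive and a negative edge.

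For condition (2) I would first reduce to chordless odd cycles: a chord of an odd cycle cuts off a strictly shorter odd cycle, and any bridge to the shorter cycle is still a bridge to the original, so it suffices to treat chordless odd cycles $C,C'$. Such a cycle cannot contain both a positive and a negative edge, since by Corollary~\ref{opp} those would be non-adjacent and the accompanying zero edges $g,h$ would be chords; hence every chordless odd cycle lies entirely in $G_+$ or entirely in $G_-$ (an all-zero cycle lying in both). If $C$ and $C'$ sit on the same side, the $2$-connectivity by bridges is inherited from the goodness of that subgraph. Otherwise $C\subseteq G_+$ must carry a positive edge $e$ and $C'\subseteq G_-$ a negative edge $f$, and applying Corollary~\ref{opp} to the pair $(e,f)$ produces \emph{exactly} two zero-edge bridges between $C$ and $C'$; a quick check (using that $e,f$ are vertex-disjoint) shows one of these avoids any single shared node. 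I note that goodness gives normality, so by Theorem~\ref{Yan:Normality} the odd cycle condition for $G$ is automatic, and the point of this step is only to upgrade the single bridge to the required $2$-connectivity.

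The heart of the argument is condition (1) for an even long cycle $C$ that contains both a positive edge $e=(c_p,c_{p+1})$ and a negative edge $f=(c_q,c_{q+1})$. By Corollary~\ref{opp} such a pair is framed by a $4$-cycle whose two zero edges are chords of $C$. In the ``crossed'' framing these chords have spans $q-p\pm 1$, and a parity computation --- in the type II case this is precisely Lemma~\ref{even cycle lonely pair}, after choosing $e,f$ separated only by zero edges so that the relevant arc carries a single positive/negative pair --- shows one of them is an even chord, finishing this subcase. The ``parallel'' framing instead yields two \emph{crossing} odd chords $(c_p,c_q)$ and $(c_{p+1},c_{q+1})$. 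Here I would reroute $C$ along these zero chords so as to delete its negative edge(s) and produce an auxiliary even long cycle $D$ on the same vertex set, all of whose edges lie in $G_+$; applying condition (1) for the good graph $G_+$ furnishes an even chord or an odd-triple of $D$, which I then translate into either an even chord of $C$ or a third chord completing an odd-triple of $C$ (the two already-present chords cross).

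I expect the main obstacle to be exactly this last translation. The even/odd classification of a chord and the crossing relation are read off the cyclic order of the ambient cycle, and $D$ traverses the shared vertices in a different order than $C$; so an even chord of $D$ need not be an even chord of $C$, and I must track how the two cyclic orders differ and verify that the auxiliary cycle is genuinely long (so that $G_+$'s good-certificate is nontrivial). Handling cycles with several positive and negative edges --- where more than one zero chord is needed to reroute into $G_+$ --- is the delicate bookkeeping that makes the parallel case of condition (1) the crux. Splitting the whole proof along the two types of Corollary~\ref{decomposition types}, and using Lemma~\ref{even cycle lonely pair} to pin down parities in the type II case, should keep this case analysis under control.
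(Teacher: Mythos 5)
Your proposal follows the paper's overall architecture --- reduce everything to the two combinatorial conditions of Theorem~\ref{howgood}, then use Corollary~\ref{opp}, a reduction to chordless cycles, and the parity Lemma~\ref{even cycle lonely pair} --- and your treatment of condition (2) is, up to arguing directly rather than by contradiction, the same as the paper's cases (2) and (3): a chordless odd cycle cannot carry both signs (the two zero edges of the framing $4$-cycle would be chords), same-side pairs inherit the bridge condition from the \good{} subgraph, and an opposite-side pair gets its two bridges from the zero edges supplied by Corollary~\ref{opp}. The genuine divergence is in condition (1). There the paper argues: the two zero chords produced by Corollary~\ref{opp} must be odd (no even chords exist), hence by Lemma~\ref{even cycle lonely pair} a third nonzero edge of $C$ exists, whence a third chord and an odd-triple. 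That inference is only valid for the nested framing (chord spans $q-p\pm 1$), where ``both chords odd'' forces $q-p$ odd against the Lemma; in the crossing framing (both spans $q-p$), ``both chords odd'' means $q-p$ even, which is perfectly consistent with the Lemma, so no third nonzero edge is forced and the paper's write-up glosses over exactly this case. You correctly isolate this crossing case as the crux (your labels ``crossed'' and ``parallel'' are swapped, but your spans and your claim about which pair crosses are right), and you propose something the paper does not do: reroute $C$ through the two crossing zero chords into an even cycle $D$ of the same length lying in $G_+$, and pull back the \good{} certificate of $G_+$.

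This different route does work, and the obstacle you flag --- translating $D$'s certificate back to $C$ --- is smaller than you fear, precisely because of the paper's weak definition of odd-triple (three chords, at least two of which cross). If $C$ has any third chord at all, it completes an odd-triple with the two crossing zero chords, and extra nonzero edges of $C$ only create extra chords; so one may assume $e$ and $f$ are the only nonzero edges and $u,v$ the only chords of $C$, in which case $D$ consists entirely of zero edges and lies in $G_+$. Then every chord of $D$ lying in $G_+$ other than the displaced edge $e$ (note $f\notin G_+$) is a chord of $C$ distinct from $u,v$, again giving an odd-triple of $C$; and $e$ itself sits in $D$ at span $q-p$, which is even in this case, so $e$ is an odd chord of $D$. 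Hence if $e$ is the only $G_+$-chord of $D$, then $D$ is an even long cycle of $G_+$ with no even chord and no odd-triple, contradicting that $G_+$ is \good{}. No further bookkeeping of cyclic orders is needed. So: same skeleton and same condition-(2) argument as the paper, but at the crux of condition (1) your rerouting idea is genuinely different from the paper's forced-third-edge argument --- and it is the more robust of the two, since it covers the crossing-framing configuration that the paper's own deduction does not reach.
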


\begin{proof}
Suppose $\Pc_G$ were not \good{}. Then $G$ fails at least one
property listed in Theorem \ref{howgood}. This means one of the following must
be true:
\begin{enumerate}
\item $G$ has a even long cycle $C$ without even chords or odd triples.
\item $G$ has two edge-disjoint odd cycles $C_1$ and $C_2$ that have at most one bridge between them and do not share a vertex.
\item $G$ has two edge-disjoint odd cycles $C_1$ and $C_2$ that have no bridges and share a vertex.
\end{enumerate}

We'll show that each of these cases causes a contradiction:

\begin{enumerate}
\item There must be at least one positive edge $e_1$ in $C$, since otherwise $C$ would appear in $G_-$, failing the assumption that $G_-$ was \good{}. Similarly, $C$ must contain a negative edge $e_2$ as well. There must be a $4$-cycle $(e_1, u, e_2, v)$, so $u$ and $v$ are a pair of chords of $C$ with the same parity. By assumption on $C$ having no even chords, $u$ and $v$ must be odd chords of $C$. Thus, by Lemma~\ref{even cycle lonely pair}, there must be at least one more nonzero edge $e_3$ in $C$. Without loss of generality it is negative; then $e_3$ forms a $4$-cycle with $e_1$ as well and creates another pair of odd chords $C$ by the same reasoning above and one of them must intersect one of $u$ and $v$. However, this creates a odd triple inside $C$, which is a contradiction.

\begin{center}
\includegraphics{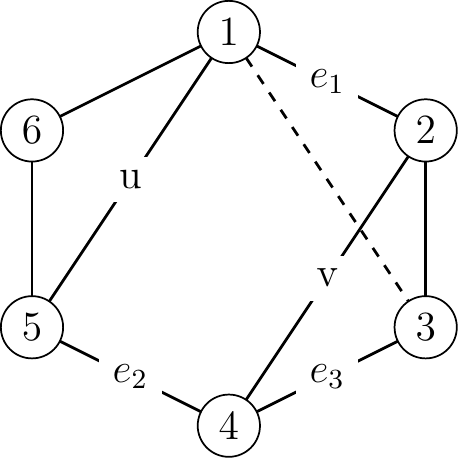}
\end{center}

\item Suppose $d$ is a chord in $C_1$; then $d$ forms a smaller odd cycle with a subset of vertices of $C_1$ that must still have at most one bridge with $C_2$ and share no vertices. Thus, we can assume both $C_1$ and $C_2$ are chordless.

By the same reasoning as the previous case of the long even cycle,
there must be at least one positive edge $e_1$ (and negative edge
$e_2$) in $C_1$ or $C_2$, else $G_-$ (respectively $G_+$) would not
be \good{}. First, note that $e_1$ and $e_2$ must not be in different
cycles, since this would create a $4$-cycle between them that would
serve as $2$ bridges, creating a contradiciton. Thus, all the
nonzero edges must be in one of the cycles, without loss of
generality $C_1$. But this is impossible, since the $4$-cycle
created by $e_1$ and $e_2$ must introduce two chords in $C_1$, which
we assumed to be chordless. Thus, we again have a contradiction.

\item Again, if $d$ is a chord in $C_1$, then either we get a smaller odd cycle that does not share a vertex or any bridges with $C_2$, or we get a smaller cycle that shares a vertex with $C_2$ and no bridges. Thus, we can again assume both $C_1$ and $C_2$ are chordless. The same reasoning as above gives a contradiction. \qedhere
\end{enumerate}
\end{proof}

\begin{Example}
\label{example:other5} {\em
Knowing now that we cannot have both $G_+$ and $G_-$ be \good{} while $G$ is not \good{}, we end this paper with examples showing that all other possibilities under decomposition can be realized. This shows that we cannot extend the statement of Theorem~\ref{good if both are good} to include the ``only if'' case.

\begin{enumerate}
\item $G$ is \good{}, $G_+$, $G_-$ are \good{}. Note that this is Example~\ref{ex:first} again.
\begin{center}
\includegraphics{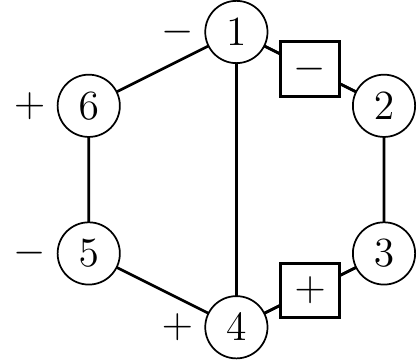}
\end{center}
\item $G$ is \good{}, $G_+$ is not \good{}, and $G_-$ is \good{}.
\begin{center}
\includegraphics{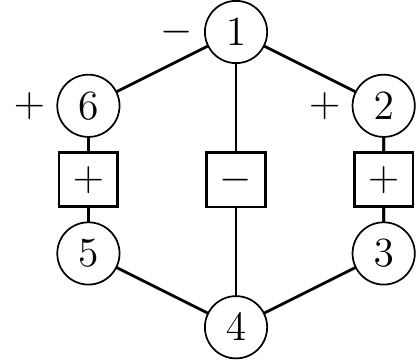}
\end{center}

\item $G$ is \good{}, $G_+$ and $G_-$ are not \good{}.
\begin{center}
\includegraphics{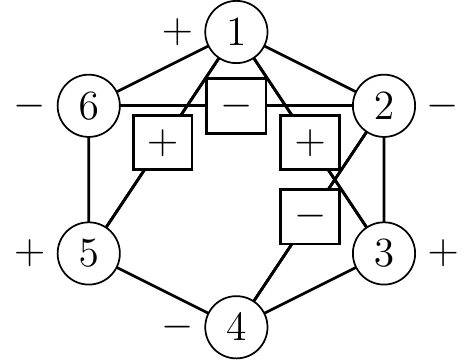}
\end{center}

\item $G$ is not \good{}, $G_+$ and $G_-$ are not \good{}.

\begin{center}
\includegraphics{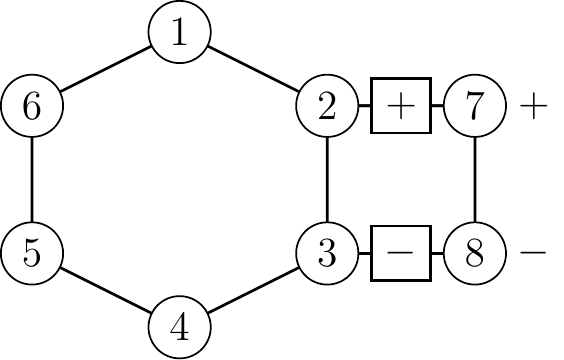}
\end{center}

\item $G$ is not \good{}, $G_+$ is \good{}, and $G_-$ is not \good{}.

\begin{center}
\includegraphics{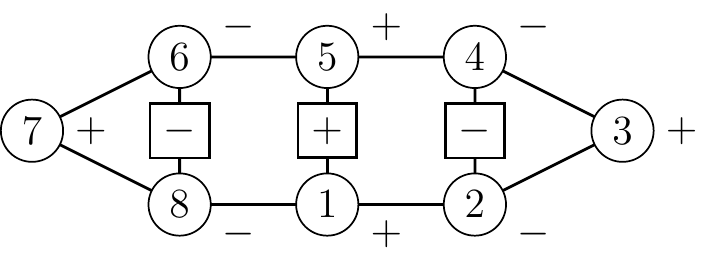}
\end{center}

\end{enumerate}
}
\end{Example}

\end{document}